\title{Self-stabilizing processes based on random signs \\
}
\author{K.J. Falconer \\
\small{{\em Mathematical Institute,
University of St~Andrews, North Haugh, St~Andrews,}} \\
\small{{\em Fife, KY16~9SS, Scotland, UK }} \\
\small{ and } \\
J. L\'{e}vy V\'{e}hel \\
\small{{\em Case Law Analytics \& Inria,}} \\
\small{{\em Universit\'{e} Nantes, Laboratoire de Math\'{e}matiques 
Jean Leray}}\\
\small{{\em2 Rue de la Houssini\`{e}re - BP 92208 - F-44322 Nantes Cedex, France}}}
\newcommand{\E}{\mathbb{E}}
\renewcommand{\P}{\mathbb{P}}
\newcommand\ed{\stackrel{{\rm dist}}{=}}
\newcommand\topp{\stackrel{{\rm p}}{\rightarrow}}
\newcommand\tod{\stackrel{{\rm dist}}{\rightarrow}}
\newcommand{\be}{\begin{equation}} 
\newcommand{\ee}{\end{equation}} 
\newcommand\X{{\sf X}}
\newcommand\Y{{\sf Y}}
\newcommand\Pt{{\sf P}}
 \newtheorem{theo}{Theorem}[section]
 \newtheorem{lem}[theo]{Lemma}
 \newtheorem{prop}[theo]{Proposition}
\begin{document}
\maketitle

\begin{abstract}
\noindent  A self-stabilizing processes $\{Z(t), t\in [t_0,t_1)\}$ is a random process which when localized, that is scaled to a fine limit near a given $t\in [t_0,t_1)$, has the distribution of an $\alpha(Z(t))$-stable process, where $\alpha: \mathbb{R}\to (0,2)$ is a given continuous function. Thus the stability index near $t$ depends on the value of the process at $t$. In another paper \cite{FL2} we constructed self-stabilizing processes using sums over plane Poisson point processes in the case of $\alpha: \mathbb{R}\to (0,1)$ which depended on the almost sure absolute convergence of the sums. Here we construct pure jump self-stabilizing processes when $\alpha$ may take values greater than 1 when convergence may no longer be absolute. We do this in two stages, firstly by setting up a process based on a fixed point set but taking random signs of the summands, and then randomizing the point set to get a process with the desired local properties.
 \end{abstract}

\section{Introduction and background}
\setcounter{equation}{0}
\setcounter{theo}{0}
For a fixed $ 0<\alpha \leq 2$ {\it symmetric $\alpha$-stable L\'{e}vy motion} $\{L_\alpha (t), t\geq 0\}$   is a stochastic process characterized by having stationary independent increments with $L(0) = 0$ almost surely, and 
$L_\alpha (t) - L_\alpha (s)\ (s>t)$ having the distribution of $S_\alpha((t-s)^{1/\alpha},0,0)$,
where $S_\alpha(c, \beta,\mu)$ denotes a stable random variable with stability-index $\alpha$, with  scale parameter $c$, skewness parameter $\beta$ and shift $\mu$.  A detailed account of such processes may be found in \cite{Bk_Sam} but  we summarize here the  features we need.
Stable motion $L_\alpha $  is $1/\alpha$-self-similar in the sense that $L_\alpha (c t)$ and $c^{1/\alpha}L_\alpha (t)$ are equal in distribution so in particular have the same finite-dimensional distributions. There is a version of  $L_\alpha $ such that its sample paths are c\`{a}dl\`{a}g, that is right continuous with left limits. 

One way of representing  symmetric $\alpha$-stable L\'{e}vy motion $L_\alpha$ is as a sum over a plane point process.
 Throughout the paper we write
$$
r^{\langle s\rangle} = {\rm sign}(r)|r|^s \mbox{ for }r\in \mathbb{R}, s\in \mathbb{R}.
$$
Then
\begin{equation}\label{sum}
L_\alpha  (t) = C_\alpha 
\sum_{(\X,\Y) \in \Pi} 1_{(0,t]}(\X) \Y^{\langle-1/\alpha\rangle},
\end{equation}
where $C_\alpha$ is a normalising constant given by
$$
C_\alpha = \Big(\int_0^\infty u^{-\alpha} \sin u\, du \Big)^{-1/\alpha}
$$
and where $\Pi$  is a Poisson point process on $\mathbb{R}^+ \times\mathbb{R}$  with plane Lebesgue measure $\mathcal{L}^2$ as mean measure, so that for a Borel set $A \subset \mathbb{R}^+ \times\mathbb{R}$ the number of points of $\Pi$ in $A$ has a Poisson distribution with  parameter $\mathcal{L}^2(A)$, independently for disjoint $A$. 
The sum \eqref{sum} is almost surely absolutely uniformly convergent if $0<\alpha<1$, but if $\alpha\geq 1$ then \eqref{sum}  must be taken as the limit as $n\to \infty$ of symmetric partial sums
$$
L_{\alpha ,n} (t) = C_\alpha 
\sum_{(\X,\Y) \in \Pi : |\Y|\leq n} 1_{(0,t]}(\X) \Y^{\langle-1/\alpha\rangle},
$$
in the sense that $\|L_{\alpha ,n}-L_\alpha\|_{\infty}\to 0$ almost surely.

Several variants of $\alpha$-stable motion have been considered. 
For example, for {\it multistable L\'{e}vy motion} $\{M_{\alpha} (t), t\geq 0\}$ the stability index $\alpha$ in  \eqref{sum} can depend on  $\X$ so that the local behaviour changes with $t$, see \cite{FLL,FL,KFLL,XFL,LLA,LLL,LL}.  Thus given a continuous $\alpha: \mathbb{R}^+ \to (0,2)$, 
$$
M_{\alpha}(t) =  
\sum_{(\X,\Y) \in \Pi} 1_{(0,t]}(\X) C_{\alpha(\X)} \Y^{\langle-1/\alpha(\X)\rangle}.
$$
Then $M_{\alpha}$ is a Markov process. Under certain conditions it is {\it localisable}  with {\it local form} $L_{\alpha(t)}$, in the sense that near $t$ the process `looks like' an $\alpha(t)$-stable process, that is for each $t>0$ and $u \in \mathbb{R}$,
$$\frac{M_{\alpha}(t+ru) -M_{\alpha}(t)}{r^{1/\alpha(t)}} \tod L_{\alpha(t)}(u)$$
as $r\searrow 0$, where convergence is in distribution with respect to the Skorohod metric and consequently  is convergent in finite dimensional distributions, see \cite{FLL,FL}. 

The local stability parameter of  multistable L\'{e}vy motion depends on the time $t$ but in some contexts, for example in financial modelling,  it may be  appropriate for the local stability parameter to depend instead (or even as well) on the {\it value} of the process at time $t$. Such a process might be called `self-stabilizing'. Thus, for suitable 
$\alpha: \mathbb{R} \to (0,2)$, we seek a process $\{Z (t), t\geq 0\}$ that is localisable with  local form $L^0_{\alpha(Z (t))}$, in the sense that for each $t$ and $u>0$,
\begin{equation}\label{locdef1}
\frac{Z(t+ru) -Z(t)}{r^{1/\alpha(Z (t))}}\bigg|\,  \mathcal{F}_t \  \tod \ L^0_{\alpha(Z (t))}(u)\end{equation}
as $r\searrow 0$,
where convergence is in distribution and finite dimensional distributions and where $\mathcal{F}_t$ indicates conditioning on the process up to time $t$. (For notational simplicity it is easier to construct  $Z_{\alpha}$ with the non-normalised $\alpha$-stable processes $L^0_{\alpha} = C_\alpha^{-1} L_{\alpha}$ as its local form.)

Throughout the paper we write $D[t_0,t_1)$ for the c\`{a}dl\`{a}g functions on the interval $[t_0,t_1)$, that is functions that are right continuous with left limits; this is the natural space for functions defined by sums over point sets. 

In an earlier paper \cite{FL2} we constructed self-stabilizing processes for  $\alpha: \mathbb{R}^+ \to (0,1)$  by first showing that there exists a deterministic function $f\in D[t_0,t_1)$ satisfying the relation
$$f(t) = a_0 +\sum_{(x,y) \in \Pi} 1_{(t_0,t]}(x) y^{\langle-1/\alpha(f(x_-))\rangle}$$
for a {\it fixed} point set $\Pi$, and then randomising to get a random function $Z$ such that
\begin{equation*}
Z  (t) =  a_0 + \sum_{(\X,\Y) \in \Pi} 1_{(t_0,t]}(\X)\, \Y^{\langle-1/\alpha(Z  (\X_-))\rangle} \qquad (t_0\leq t <t_1).
\end{equation*}
Then, for all $t\in [t_0,t_1)$ this random function satisfies \eqref{locdef1} almost surely. However, this approach depends on the infinite sums being absolutely convergent, which need not be the case if $\alpha(t) \geq1$ for some $t$.

Here we use an alternative approach to construct self-stabilizing processes where $\alpha: \mathbb{R}^+ \to (0,2)$ and in general we cannot assume absolute convergence of the sums. We  show in Section \ref{sec:signs}  that for a fixed point set $\Pi^+\subset (t_0,t_1)\times \mathbb{R}^+$ and independent random `signs' $S(x,y)=\pm1$ there exists almost surely a random function $Z\in D[t_0,t_1)$ satisfying
$$ Z(t) = a_0 +\sum_{(x,y) \in \Pi^+} 1_{(t_0,t]}(x) S(x,y)y^{-1/\alpha (Z(x_-))}\qquad (t_0 \leq t <t_1), $$
see Theorem \ref{finlim2}. To achieve this we work with partial sums 
$$ Z_n(t) = a_0 +\sum_{(x,y) \in \Pi^+:|y|\leq n} 1_{(t_0,t]}(x) S(x,y)y^{-1/\alpha (Z(x_-))}\qquad (t_0 \leq t <t_1) $$
and show that the limit as $n\to \infty$ exists in a norm given by $\E\big(\|\cdot\|_\infty^2\big)^{1/2}$, where $\E$ denotes expectation. This is more awkward than it might seem at first sight since,  as $n$ increases, if a new  point $(x,y)\in \Pi^+$  enters the sum, then $Z_n(t)$ will change for $t\geq x$ so for all  $(x',y')$ with $x'>x$ and  $y'<y$    the summands ${y'}^{\langle-1/\alpha(Z_n({x'}_{-}))\rangle}$ will change, with a knock on effect so that the change in $Z_n(t)$ may be considerably amplified as $t$ increases past further $x$ with $(x,y)\in \Pi^+$ and $y< n$.

In Section \ref{sec:rand} we randomise the construction further by taking $\Pi^+$ to be a Poisson point process on $ (t_0,t_1)\times \mathbb{R}^+$ with  mean measure $2{\mathcal L}^2$ which, combined with the random signs, gives a point process with the same distribution as $\Pi$ on $ (t_0,t_1)\times \mathbb{R}$.  We show that the resulting process $Z$ satisfies a H\"{o}lder continuity property and is self-stabilizing in the sense of \eqref{locdef1}, see Theorem \ref{rtloc}.

\subsection{Basic facts used throughout the paper}\label{sec1.1}
For the rest of the paper we fix $a_0 \in \mathbb{R}$ and  $0<a <b<2 $ together with a function $\alpha :\mathbb{R} \to [a,b]$ that is continuously differentiable with bounded derivative.
By the mean value theorem,
$$ y^{-1/\alpha(v)} - y^{-1/\alpha(u)}
\ =\ (v- u)
y^{-1/\alpha(\xi)}\log y \frac{\alpha'(\xi)}{\alpha(\xi)^2}\qquad (y>0,\ u,v \in \mathbb{R} ),$$
where $\xi \in (u,v)$. In particular this gives the  estimate we will use frequently:
\be 
\big| y^{-1/\alpha(v)} - y^{-1/\alpha(u)}\big |
\  \leq  \ M\  |v- u|\, y^{-1/(a,b)}\qquad (y>0, \ u,v \in \mathbb{R}), \label{ydif4}
 \ee
where
$$
M\  =\  \sup_{\xi\in\mathbb{R}} \frac{|\alpha'(\xi)|}{\alpha(\xi)^2},
$$
and for convenience we write
$$
y^{-1/(a,b)} \ = \ \max\big\{ y^{-1/a}\big(1+|\log y| \big), y^{-1/b}\big(1+|\log y| \big)\big\}
\qquad (y>0)
$$
and 
$$ y^{-2/(a,b)} \ =\ (y^{-1/(a,b)})^2.$$ 

For $t_0< t_1$ and a suitable probability space $\Omega$ (to be specified later), we will work with  functions 
$F: \Omega\times [t_0, t_1) \to \mathbb{R}\cup \{\infty \}$ which we assume to be measurable (taking  Lebesgue measure on  $[t_0, t_1)$).  Writing  $F_\omega(t)$ for the value of $F$ at $\omega \in \Omega$ and  $t\in [t_0, t_1)$, we think of   $F_\omega$ as a random function on  $[t_0, t_1)$ in the natural way (most of the time we will write $F$ instead of $F_\omega$ when the underlying randomness is clear). In particular we will work in the space 
$${\mathcal D} \ = \ \big\{F: F_\omega \in D[t_0, t_1) \mbox{ for almost all }
 \omega \in \Omega \mbox{ with } \E\big(\|F\|_\infty^2\big)<\infty\big\},$$
 where $\E$ is expectation, $D[t_0,t_1)$ denotes the c\`{a}dl\`{a}g functions, and $\|\cdot\|_\infty$ is the usual supremum norm. By identifying $F$ and $F'$ if $F_\omega = F'_\omega$ for almost all $\omega \in \Omega$, this becomes a normed space under the norm
\be\label{norm}
\Big(\E\big(\|F\|_\infty^2\big)\Big)^{1/2}.
\ee
A routine check shows that \eqref{norm} defines a complete norm on ${\mathcal D}$.

\section{Point sums with random signs}\label{sec:signs}
\setcounter{equation}{0}
\setcounter{theo}{0}

In this section we fix a discrete point set $\Pi^+ \subset (t_0,t_1)\times \mathbb{R^+}$ 
and form sums over values at the points of $\Pi^+$ with an independent random assignment of sign $+$ or $-$ at each point of $\Pi^+$.

We will assume that the point set $\Pi^+$ satisfies 
$$
 \sum_{(x,y) \in \Pi^+}y^{-2/(a,b)}< \infty;
$$
this will certainly be the case if $ \sum_{(x,y) \in \Pi^+}y^{-2/b'}< \infty$ for some $b'$ with  $b<b'<2$.

Our first aim is to show that if $\{S(x,y) \in \{-1,1\}: (x,y) \in \Pi^+\}$ are random `signs', that is independent random variables taking the values $1$ and $-1$ with equal probability $\frac{1}{2}$, then, almost surely, there exists  a random function $Z \in D[t_0, t_1)$ satisfying
\begin{equation}\label{identsigns}
Z(t) = a_0 +\sum_{(x,y) \in \Pi^+} 1_{(t_0,t]}(x) S(x,y)y^{-1/\alpha (Z(x_-))}\qquad (t_0 \leq t <t_1). 
\end{equation}
in an appropriate sense. This $Z$ will be the  limit in norm of the random functions obtained by restricting the sums to $y \leq n$. Thus we define for $n\in \mathbb{N}$
\begin{equation}\label{identsignsfin}
Z_n(t) = a_0 +\sum_{(x,y) \in \Pi^+:\, y\leq n} 1_{(t_0,t]}(x) S(x,y)y^{-1/\alpha (Z_n(x_-))}\qquad (t_0 \leq t <t_1).
\end{equation}
Here, and throughout this section, our probability space has  $\Omega$ as the set $ \{-1,1\}^{ \Pi^+}\equiv \{S(x,y) \in \{-1,1\}: (x,y) \in \Pi^+\}$ of all assignments of signs $\pm1$ to the points of $ \Pi^+$, and  $\sigma$-field  generated by the subsets of the form $S \times \{-1,1\}^{ \Pi^+\setminus X}$ for each finite $X\subset  \Pi^+$ and each $S\subset \{-1,1\}^X$. In particular, the probability that a given set of $k$ points of $ \Pi^+$ having any particular assignment of signs is $2^{-k}$.

Note that the sum in \eqref{identsignsfin} is over the finite set $\{(x,y)\in \Pi^+: y\leq n\}$ so, given the $S(x,y)$, the piecewise constant $Z_n(t)\in D[t_0,t_1)$ can be evaluated inductively over increasing $x$ with $(x,y)\in \{\Pi^+: y\leq n\}$  in a finite number of steps. Nevertheless, as has been remarked, we need to be careful about taking limits of $\{Z_n\}_n$ as $n \to \infty$ since on increasing $n$ the contributions from the summands with $0<y<n$  change as the values of $Z_n(x_-)$ change.   Note that  if $0<a<b<1$ the sum  \eqref{identsigns} is absolutely convergent, and this case is considered in \cite{FL2}, but if $0<a<b<2$ then $\alpha(z)$ may take values greater than 1 and care is needed in defining $Z$.  We will show that there exists $Z\in {\mathcal D}$ such that $\E\big(\|Z_n-Z\|_\infty^2\big) \to 0$  and $Z$ satisfies \eqref{identsigns} in an appropriate sense. In particular  there is a sequence of integers $n_j\nearrow\infty$ such that almost surely $ \|Z_{n_j}- Z\|_\infty \to 0$, i.e. $Z_{n_j}$ converges uniformly to $Z$.

\subsection{Existence of functions defined by random signs}\label{sec2.1}

 To obtain such a $Z$ we show that $\{Z_n\}_n$ is a Cauchy sequence in the complete norm \eqref{norm}.
$\E\big(\|\cdot\|_\infty^2\big)^{1/2}$ on ${\mathcal D}$.

It is convenient to make a further assumption on $\Pi^+$, that if $(x,y), (x',y') \in\Pi^+$ then $x\neq x'$. Without this assumption the results remain valid with an essentially identical proof, but the notation becomes more cumbersome as the single terms added in \eqref{mart} have to be replaced by sums over several terms corresponding to each point $(x_i,y_i)$ with a common value of $x_i$. In any case, when in Section 3 we let $\Pi^+$ be a realisation of a Poisson point process, this assumption will hold almost surely.

\begin{prop}\label{finlim1A}
Let  $a_0, \alpha $ and $\Pi^+$ be as above  and  let  $Z_n$ be given by  \eqref{identsignsfin}.  Then for $m\geq n\geq 1$,
\begin{equation}
\E\big(\|Z_m -Z_n\|_\infty^2\big) 
\  \leq  \ 4 \prod_{(x,y)\in \Pi^+ : 0<y\leq n} (1+M^2y^{-2/(a,b)}) \sum_{(x,y)\in \Pi ^+ : n<y\leq m} y^{-2/b}. \label{cauchy0}
 \end{equation}
In particular $\{Z_n\}_n$ is a Cauchy sequence in ${\mathcal D}$ under the norm $\E(\|\cdot\|^2_\infty)^{1/2}$.
\end{prop}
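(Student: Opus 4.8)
The plan is to estimate $\|Z_m - Z_n\|_\infty$ by tracking how the extra summands with $n < y \leq m$ perturb the piecewise-constant process, and to control the knock-on amplification via a discrete Gr\"onwall / product argument whose expectation we take using the independence of the signs. Fix $m \geq n$ and write the points of $\Pi^+$ with $y \leq m$ in increasing order of their $x$-coordinates as $x_1 < x_2 < \cdots$ (finitely many, by the summability hypothesis, since $\sum y^{-2/(a,b)} < \infty$ forces $\{(x,y): y \leq m\}$ to be finite after... actually it is finite because it is discrete and the $y$-values are bounded — I would note this). Let $D_k = Z_m(x_k) - Z_n(x_k)$ be the discrepancy just after the $k$-th point. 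The key recursion: when we pass the point $(x_k, y_k)$, if $y_k \leq n$ the same signed term is added to both $Z_m$ and $Z_n$ but with exponents $-1/\alpha(Z_m(x_{k-}))$ versus $-1/\alpha(Z_n(x_{k-}))$, so by the mean-value estimate \eqref{ydif4} the discrepancy changes by at most $M |D_{k-1}| \, y_k^{-1/(a,b)}$ in absolute value; if $n < y_k \leq m$ then additionally a genuinely new term $\pm y_k^{-1/\alpha(Z_m(x_{k-}))}$ of size $\leq y_k^{-1/b}$ enters $Z_m$ only.

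So I would set up two running quantities: an upper "deterministic envelope" and a martingale-like "signed error". Concretely, define $E_k$ by $E_0 = 0$ and $E_k = (1 + M^2 y_k^{-2/(a,b)})^{1/2} \cdot(\text{something})$... rather, the cleanest route is to introduce, for each $k$, the bound $|D_k| \leq |D_{k-1}|(1 + M y_k^{-1/(a,b)}) + \delta_k$ where $\delta_k = y_k^{-1/b} \mathbf{1}[n < y_k \leq m]$ — wait, but the $\delta_k$ terms come with random signs, so iterating this triangle-inequality bound directly and then taking $\E$ would lose the cancellation and give $(\sum y^{-1/b})^2$ rather than $\sum y^{-2/b}$. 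To get the squared sum I would instead keep the signs: write $D_k = \sum_{j \leq k} \varepsilon_j c_{j,k}$ where $\varepsilon_j = S(x_j,y_j)$ for the new points and the coefficients $c_{j,k}$ are $\mathcal{F}_{j-1}$-style measurable (they depend only on signs $S(x_i,y_i)$ with $i < j$), bounded by $c_{j,j} \leq y_j^{-1/b}$ and satisfying the growth $|c_{j,k}| \leq |c_{j,k-1}|(1 + M y_k^{-1/(a,b)})$, hence $|c_{j,k}| \leq y_j^{-1/b} \prod_{j < i \leq k,\, y_i \leq n}(1 + M y_i^{-1/(a,b)})$. Then the $\varepsilon_j$ being independent, mean-zero, and independent of the $c_{j,k}$ for the "new" indices gives $\E(D_k^2) = \sum_{j\leq k} \E(c_{j,k}^2) \leq \big(\prod_{y \leq n}(1 + M y^{-1/(a,b)})\big)^2 \sum_{n < y_j \leq m} y_j^{-2/b}$, and $(1+Mt)^2 \leq (1+M^2 t^2)(1+\ldots)$... more simply $\prod(1+Mt_i)^2 \leq \prod(1 + M^2 t_i^2)$ fails, but $\prod(1+Mt_i)^2 = \prod(1 + 2Mt_i + M^2 t_i^2)$; to land exactly on the stated $\prod(1+M^2 y^{-2/(a,b)})$ I would instead organize the recursion additively at the level of $\E(\cdot)$: show $\E(D_k^2) \leq (1 + M^2 y_k^{-2/(a,b)})\E(D_{k-1}^2) + \delta_k^2$ by expanding the square and using that the cross term $\E(D_{k-1} \cdot (\text{perturbation}))$ has the right sign control (Cauchy–Schwarz on $\E(D_{k-1}^2)^{1/2} \cdot M y_k^{-1/(a,b)}\E(D_{k-1}^2)^{1/2}$ absorbs the cross term, at the cost of the factor $2$ in front), and the new-term variance $\delta_k^2 = y_k^{-2/b}$ adds with no cross contribution by mean-zero independence. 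Unwinding this linear recursion gives exactly the product times $\sum_{n<y\leq m} y^{-2/b}$, with the leading $4$ from the Cauchy–Schwarz/AM-GM step.

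Finally, to go from $\E(D_k^2)$ at the jump times to $\E(\|Z_m - Z_n\|_\infty^2)$: since both $Z_m$ and $Z_n$ are piecewise constant with jumps only at the $x_k$, and between the last common jump the difference $Z_m(t) - Z_n(t)$ is either $D_k$ or differs from some $D_k$ by at most one extra new summand already accounted for, the supremum over $t$ of $|Z_m(t)-Z_n(t)|$ is attained among the finitely many values $\{D_k\}$ (or trivially bounded by $\max_k |D_k|$ plus a new-term contribution), and one checks $\|Z_m-Z_n\|_\infty^2 \leq \max_k D_k^2 \leq$ (the running envelope evaluated at the end), so taking $\E$ and using monotonicity of the recursion yields \eqref{cauchy0}. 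The Cauchy claim is then immediate: as $n \to \infty$ the product $\prod_{y \leq n}(1 + M^2 y^{-2/(a,b)})$ converges to a finite limit (since $\sum y^{-2/(a,b)} < \infty$), while the tail $\sum_{n < y} y^{-2/b} \to 0$, so the right-hand side tends to $0$, and completeness of $\mathcal{D}$ under \eqref{norm} finishes it.

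The main obstacle I anticipate is the bookkeeping that keeps the signs intact through the knock-on amplification — i.e., verifying that $D_k$ really is an $\varepsilon_j$-linear form with coefficients measurable with respect to the "earlier" signs, so that orthogonality applies and one gets $\sum y^{-2/b}$ rather than its square. A naive triangle-inequality iteration is clean but loses a summability power; the care is in making the martingale/orthogonality structure explicit while simultaneously controlling the multiplicative error propagation coming from \eqref{ydif4}, and in confirming that the supremum over $t \in [t_0,t_1)$ reduces to the finitely many post-jump values.
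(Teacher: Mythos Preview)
Your overall strategy---track $D_k = Z_m(x_k)-Z_n(x_k)$ through the ordered jump points, distinguish ``old'' points ($y_k\le n$) from ``new'' ones ($n<y_k\le m$), and unwind a multiplicative--additive recursion---is exactly the paper's. But two steps in your execution are genuine gaps.

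\medskip
\textbf{(1) The cross term is exactly zero, not merely Cauchy--Schwarz bounded.} At an old point you write the perturbation as $S(x_k,y_k)\big(y_k^{-1/\alpha(Z_m(x_{k-1}))}-y_k^{-1/\alpha(Z_n(x_{k-1}))}\big)$; since $S(x_k,y_k)$ is independent of $\mathcal F_{k-1}$ and mean-zero while $D_{k-1}$ and the bracketed factor are $\mathcal F_{k-1}$-measurable, conditioning on $\mathcal F_{k-1}$ kills the cross term outright and gives the clean recursion
\[
\E\big(D_k^2\mid\mathcal F_{k-1}\big)=D_{k-1}^2+\big(\cdots\big)^2\le \big(1+M^2 y_k^{-2/(a,b)}\big)D_{k-1}^2.
\]
Your Cauchy--Schwarz route instead yields the factor $(1+M y_k^{-1/(a,b)})^2$, so the accumulated product is $\prod_{y\le n}(1+M y^{-1/(a,b)})^2$. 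Finiteness of this product needs $\sum y^{-1/(a,b)}<\infty$, which is \emph{not} implied by the standing hypothesis $\sum y^{-2/(a,b)}<\infty$; so your bound can diverge and the Cauchy conclusion would fail. You already invoke ``mean-zero independence'' for new points; the same observation applies verbatim at old points and is what makes the stated product $\prod(1+M^2 y^{-2/(a,b)})$ appear.

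\medskip
\textbf{(2) The passage to the supremum needs Doob's inequality.} It is true that $\|Z_m-Z_n\|_\infty^2=\max_k D_k^2$ since both processes are piecewise constant. But your claim that $\max_k D_k^2$ is bounded by ``the running envelope evaluated at the end'' is false pathwise (the martingale $D_k$ need not be monotone), and taking expectations does not help since $\E(\max_k D_k^2)\ge \max_k\E(D_k^2)$. The paper uses that $\{D_k,\mathcal F_k\}$ is a square-integrable martingale and applies Doob's $L^2$ maximal inequality to get $\E(\max_k D_k^2)\le 4\,\E(D_N^2)$; this is the true source of the constant $4$ in \eqref{cauchy0}, not any AM--GM step inside the recursion.
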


\begin{proof}
Let $m>n$. We list the points
$$\{(x,y) \in \Pi^+ : y\leq m\} = \{(x_1,y_1), \ldots, (x_N,y_N)\},$$
with $t_0<x_1 < \cdots < x_N <t_1$, where as mentioned we assume  that the $x_i$ are distinct. 
For notational convenience we set $x_0 := t_0$ and $x_{N+1} := t_1$. 
We write $i_1<i_2< \cdots<i_K$ for the indices such that $y_{i_k} \leq n$, and let $i_0 := 0$ and $i_{K+1} := N+1$
With this notation,  \eqref{identsignsfin} restricts to the jump points $x_i$ as
\begin{equation}\label{finalt}
Z_m(x_i) = a_0 +\sum_{0<j\leq i} S(x_j,y_j)y_j^{-1/\alpha (Z_m(x_{j-1}))},\quad 
Z_n(x_i) = a_0 +\sum_{k: 0<i_k\leq i} S(x_{i_k},y_{i_k})y_{i_k}^{-1/\alpha (Z_n(x_{{i_k}-1}))}. 
\end{equation}

Write
\begin{equation}\label{cki}
c_k = M^2\, y_{i_k}^{-2/(a,b)} \qquad (1\leq k \leq K)
\end{equation}
and
\begin{equation}\label{epki}
\epsilon_k = \sum_{ i_{k-1}+1\leq i \leq i_k -1}  y_i^{-2/b} \qquad (1\leq k \leq K+1).
\end{equation}
Let $\mathcal{F}_i$ be the minimal $\sigma$-field of subsets of $\Omega$ such that the sign assignments $\{S(x_j,y_j):1\leq j\leq i\}$ are Borel measurable; thus conditioning on $\mathcal{F}_i$ is equivalent to taking the finite set of values $\{S(x_j,y_j): 1\leq j\leq i\}$ as known.
We consider $\big\{Z_n(x_i) - Z_m(x_i),\mathcal{F}_i\big\}_{i=0}^N$ which from \eqref{finalt} and that $\E(S(x_j,y_j)) = 0$ is a bounded martingale. Indeed, from \eqref{finalt}, for $1\leq i\leq N$,
 \begin{align}\label{mart}
Z_m(x_{i}) - & Z_n(x_{i})\ =\   Z_m(x_{i-1}) - Z_n(x_{i-1}) \\
&+ \left\{
\begin{array}{ll}
 S(x_{i},y_{i})\,y_{i}^{-1/\alpha (Z_m(x_{i-1}))}  &  \mbox{ if } i\neq i_k   \mbox{ for all } k   \\
S(x_{i},y_{i})\big(y_{i}^{-1/\alpha (Z_m(x_{i-1}))} -   y_{i}^{-1/\alpha (Z_n(x_{i-1}))}\big) & \mbox{ if } i= i_k      \mbox{ for some } k  \nonumber 
\end{array}.
\right.
\end{align}

We will
show by induction on $i$ that for all $1\leq k\leq K+1$ and all $i_{k-1} \leq i <i_k$,
\begin{eqnarray}
\E\big((Z_m(x_i) - Z_n(x_i))^2\big)
&\leq& (1+c_1)\cdots(1+c_{k-1}) ( \epsilon_1+ \epsilon_2+\cdots + \epsilon_{k-1})\nonumber \\
&&  +\,  y_{i_{k-1}+1}^{-2/b}  + y_{i_{k-1}+2}^{-2/b} + \cdots +  y_{i}^{-2/b}\label{ind1}\\
&\leq& (1+c_1)\cdots(1+c_{k-1}) ( \epsilon_1+ \epsilon_2+\cdots + \epsilon_{k-1})
 \,+\,\epsilon_k.\label{ind2}
\end{eqnarray}
Note that \eqref{ind2} follows immediately from \eqref{ind1} using \eqref{epki}. Inequality  \eqref{ind1} is trivially  true when $i=0$. Let $ 0< i \leq N$ and assume inductively that \eqref{ind1} holds with $i$ replaced by $i-1$.  There are two cases.

(a) If $i\neq i_k$  for all $k $ then from \eqref{mart}
 \begin{align*}
\E\big((Z_m (x_i) - & Z_n(x_i))^2\big| \mathcal{F}_{i-1} \big)\\
&=\ \E\Big(\Big(Z_m(x_{i-1}) - Z_n(x_{i-1}) +  S(x_{i},y_{i})\,y_{i}^{-1/\alpha (Z_m(x_{i-1}))}\Big)^2\ \Big| \mathcal{F}_{i-1}\Big)\\
&=\ \big(Z_m(x_{i-1}) - Z_n(x_{i-1})\big)^2  +  y_{i}^{-2/\alpha (Z_m(x_{i-1}))}\\
&\leq \ \big(Z_m(x_{i-1}) - Z_n(x_{i-1})\big)^2  +  y_{i}^{-2/b}
\end{align*}
as $y_i\neq y_{i_k}$ for all $k$  implies  $y_i>n\geq 1$. Thus  taking the unconditional expectation  and using \eqref{ind1} for $i-1$ gives \eqref{ind1} for $i$.

(b) If $i=  i_k $ for some $1\leq k\leq K$, then from \eqref{mart},
 \begin{align*}
\E\big((Z_m & (x_i)- Z_n(x_i))^2\big| \mathcal{F}_{i-1} \big)\\
&=\ \E\Big(\Big(Z_m(x_{i-1}) - Z_n(x_{i-1}) +  S(x_{i},y_{i})\big(y_{i}^{-1/\alpha (Z_m(x_{i-1}))} -   y_{i}^{-1/\alpha (Z_n(x_{i-1}))}\big)\Big)^2\Big| \mathcal{F}_{i-1}\Big)\\
&=\ \big(Z_m(x_{i-1}) - Z_n(x_{i-1})\big)^2  + \big(y_{i}^{-1/\alpha (Z_m(x_{i-1}))} -   y_{i}^{-1/\alpha (Z_n(x_{i-1}))}\big)^2\\
&\leq \ \big(Z_m(x_{i-1}) - Z_n(x_{i-1})\big)^2\big(1 +  M^2 y_{i_k}^{-2/(a,b)}\big)\\
&\leq \ \big(Z_m(x_{i-1}) - Z_n(x_{i-1})\big)^2\big(1 +  c_k\big),
\end{align*}
 using \eqref{ydif4} and \eqref{cki}.
Again, taking the unconditional expectation and using \eqref{ind2} for $i-1$ gives \eqref{ind1} for $i$ with a vacuous sum of terms $y_j^{-2/b}$, completing the induction.

It follows from  \eqref{ind2} that for all $0\leq i \leq N$,
\begin{eqnarray}
\E\big((Z_m(x_i) - Z_n(x_i))^2\big)& \leq &  \prod_{k=1}^K (1+c_k) \sum_{k= 1}^{K+1} \epsilon_k\nonumber\\ 
 &  \leq  &  \prod_{(x,y)\in \Pi ^+ : y\leq n} (1+M^2y^{-2/(a,b)}) \sum_{(x,y)\in \Pi ^+ : n<y\leq m} y^{-2/b} \label{cauchy4}
\end{eqnarray}
using \eqref{cki} and \eqref{epki}.

Noting that $Z_m(t) -Z_n(t)$ is constant except at the jump points $x_i$, and applying Doob's maximal inequality  \cite{Wil} to the martingale 
$\big\{Z_m(x_i) - Z_n(x_i),\mathcal{F}_i\big\}_{i=0}^N$,
  we obtain
\begin{eqnarray*}
\E\big(\sup_{t_0\leq t < t_1} (Z_m(t) - Z_n(t))^2\big)
&=& \E\big(\max_{0\leq i \leq N} (Z_n(x_i) - Z_m(x_i))^2\big)\\ 
&\leq& 4\, \E \big((Z_m(x_N) - Z_n(x_N))^2\big) 
 \end{eqnarray*}
 for all $m\geq n$. 
 Combining with  \eqref{cauchy4} gives \eqref{cauchy0}.
Since 
$$\prod_{(x,y)\in \Pi ^+ } (1+M^2y^{-2/(a,b)})\ \leq\ \exp\Big(M^2\sum_{(x,y) \in \Pi ^+ } y^{-2/(a,b)}\Big)\ < \infty$$
and $\sum_{(x,y) \in \Pi ^+ } y^{-2/(a,b)}$ and $\sum_{(x,y)\in \Pi ^+ } y^{-2/b}$ are convergent by assumption, $\{Z_n\}_n$ is a Cauchy sequence.
\end{proof}

We now deduce the existence of $Z$ as the norm limit of the $Z_n$. 
 
\begin{theo}\label{finlim2}
Let  $a_0, \alpha $ and $\Pi^+$ be as above. Then there exists $Z \in {\mathcal D}$ satisfying  
\eqref{identsigns},   
 with $\lim_{n\to \infty}\E\big(\|Z_n -Z\|_\infty^2\big)=0$ where $Z_n$ as in  \eqref{identsignsfin};  more specifically
\be \label{diffbound}
\E\big(\|Z_n -Z\|_\infty^2\big)\ \leq\  4\prod_{(x,y)\in \Pi ^+ } (1+M^2y^{-2/(a,b)})\sum_{(x,y)\in \Pi ^+ : y> n} y^{-2/b}\ \to 0. 
\ee
Moreover, there exists a sequence $n_j \nearrow \infty$ such that almost surely $\|Z_{n_j} -Z\|_\infty \to 0$ i.e. $Z_{n_j} \to Z$ uniformly.  If $0<b<1$ then almost surely $\|Z_{n} -Z\|_\infty \to 0$. 
\end{theo}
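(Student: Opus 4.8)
The plan is to build $Z$ as the limit of the Cauchy sequence $\{Z_n\}_n$ and then verify the three claims in turn: that $Z$ satisfies the defining identity \eqref{identsigns}, that a subsequence converges uniformly almost surely, and that (when $b<1$) the whole sequence converges uniformly almost surely. Proposition \ref{finlim1A} already gives that $\{Z_n\}_n$ is Cauchy in the complete normed space $({\mathcal D}, \E(\|\cdot\|_\infty^2)^{1/2})$, so a limit $Z\in{\mathcal D}$ exists. The quantitative bound \eqref{diffbound} follows by letting $m\to\infty$ in \eqref{cauchy0} of Proposition \ref{finlim1A}, using monotone convergence to replace the partial sum $\sum_{n<y\le m}$ by the tail $\sum_{y>n}$ and the partial product by the full product over $\Pi^+$; the full product is finite and the tail sum tends to $0$ by the standing hypotheses on $\Pi^+$, exactly as in the final paragraph of the proof of Proposition \ref{finlim1A}.

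Next I would extract the almost surely uniformly convergent subsequence. Since $\E(\|Z_n-Z\|_\infty^2)\to 0$, I can choose $n_j\nearrow\infty$ with $\E(\|Z_{n_j}-Z\|_\infty^2)\le 2^{-j}$; then $\sum_j \E(\|Z_{n_j}-Z\|_\infty^2)<\infty$, so by the Borel--Cantelli lemma (or monotone convergence applied to $\sum_j \|Z_{n_j}-Z\|_\infty^2$) we get $\|Z_{n_j}-Z\|_\infty\to 0$ almost surely, i.e. $Z_{n_j}\to Z$ uniformly on $[t_0,t_1)$ for almost all $\omega$. In particular, for almost every $\omega$, $Z_\omega$ is a uniform limit of càdlàg functions and hence itself càdlàg, which is already part of membership in ${\mathcal D}$.

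Then I would establish that $Z$ satisfies \eqref{identsigns}. Fix $t\in[t_0,t_1)$. Along the subsequence $n_j$, we have $Z_{n_j}(x_-)\to Z(x_-)$ for every jump point $x<t$ of $\Pi^+$ (uniform convergence passes to left limits), and by continuity of $\alpha$ together with the estimate \eqref{ydif4}, $y^{-1/\alpha(Z_{n_j}(x_-))}\to y^{-1/\alpha(Z(x_-))}$ for each $(x,y)\in\Pi^+$ with $x\le t$. To pass the limit through the infinite sum, I would dominate the tail: for the points with $y>N$ the contribution to $\|Z_{n_j}\|$-type differences is controlled, via the same martingale/Doob argument underlying Proposition \ref{finlim1A}, by $4\prod_{\Pi^+}(1+M^2y^{-2/(a,b)})\sum_{y>N}y^{-2/b}$ uniformly in $j$, which is small for large $N$; the finitely many terms with $y\le N$ converge termwise. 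Hence $Z_{n_j}(t)\to a_0+\sum_{(x,y)\in\Pi^+}1_{(t_0,t]}(x)S(x,y)y^{-1/\alpha(Z(x_-))}$, and since also $Z_{n_j}(t)\to Z(t)$, the identity \eqref{identsigns} holds, simultaneously for all $t$ on the full-measure event where the uniform convergence holds (the right-hand side defines a càdlàg function of $t$, so equality on a dense set extends to all $t$). I expect this domination-of-the-tail step to be the main obstacle, since the ``knock-on'' amplification of changes as new points enter the sum — the very issue flagged in the introduction — is what makes the termwise limit insufficient on its own; the product factor $\prod_{\Pi^+}(1+M^2y^{-2/(a,b)})$ is precisely the device that tames it.

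Finally, for the case $0<b<1$: here $\alpha(z)\le b<1$ for all $z$, so $y^{-1/\alpha(Z_n(x_-))}\le y^{-1/b}$ and the sum \eqref{identsigns} is absolutely convergent with $\sum_{(x,y)\in\Pi^+}y^{-1/b}<\infty$ (which follows from $\sum y^{-2/b}<\infty$ together with the points with $y\le 1$ being finite in number, or more directly from the standing summability hypothesis). Then the difference $Z_n(t)-Z(t)$ is bounded for every $t$ by $\sum_{(x,y)\in\Pi^+:\,y>n}y^{-1/b}\;+\;M\sum_{(x,y)\in\Pi^+:\,y\le n}|Z_n(x_-)-Z(x_-)|\,y^{-1/(a,b)}$, using \eqref{ydif4}. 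A Gronwall-type iteration over the jump points (the finitely many terms $y\le n$ in the second sum can be absorbed, since $\prod_{\Pi^+}(1+My^{-1/(a,b)})<\infty$) shows $\|Z_n-Z\|_\infty\le C\sum_{y>n}y^{-1/b}\to 0$ for a deterministic constant $C$, giving the stated deterministic (hence almost sure) uniform convergence of the whole sequence, not merely a subsequence.
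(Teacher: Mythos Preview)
Your argument for the existence of $Z$, the bound \eqref{diffbound}, and the almost-sure convergence along a subsequence is correct and essentially coincides with the paper's: both pass to the limit $m\to\infty$ in \eqref{cauchy0} and then use a standard summability/Borel--Cantelli extraction (the paper telescopes via $\sum_j\E(\|Z_{n_{j+1}}-Z_{n_j}\|_\infty^2)<\infty$, you use $\sum_j\E(\|Z_{n_j}-Z\|_\infty^2)<\infty$, which is equivalent). Your discussion of why the limit satisfies \eqref{identsigns} is more detailed than the paper, which simply takes this as the meaning of ``satisfying \eqref{identsigns}''.

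For the case $0<b<1$ your route differs from the paper's and contains a gap. The paper does not use absolute convergence or a Gronwall iteration at all; it stays with the $L^2$ bound from Proposition~\ref{finlim1A}, applies Cauchy--Schwarz to get $\E(\|Z_{n+1}-Z_n\|_\infty)\le c\big(\sum_{n<y\le n+1}y^{-2/b}\big)^{1/2}\le c\sum_{n<y\le n+1}y^{-1/b}$, and then sums over $n$ to conclude $\E\big(\sum_n\|Z_{n+1}-Z_n\|_\infty\big)<\infty$, hence almost-sure uniform convergence. Your Gronwall approach instead needs both $\sum_{\Pi^+}y^{-1/b}<\infty$ and $\sum_{\Pi^+}y^{-1/(a,b)}<\infty$ (for the product $\prod(1+My^{-1/(a,b)})$ to be finite), and your justification that ``$\sum y^{-1/b}<\infty$ follows from $\sum y^{-2/b}<\infty$ together with the points with $y\le1$ being finite'' is incorrect: for $y>1$ one has $y^{-1/b}>y^{-2/b}$, so convergence of $\sum y^{-2/b}$ (or of the standing sum $\sum y^{-2/(a,b)}$) does not imply convergence of $\sum y^{-1/b}$ --- take for instance $y_k=k^{b}$. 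Thus your deterministic bound $\|Z_n-Z\|_\infty\le C\sum_{y>n}y^{-1/b}$ is not available under the stated hypotheses on $\Pi^+$, whereas the paper's argument, by working with consecutive increments in $L^1$ via the $L^2$ estimate, needs only the weaker control that Proposition~\ref{finlim1A} already provides on each slab $n<y\le n+1$.
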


\begin{proof}
Since $\{Z_n\}_n$ is Cauchy in the complete norm $\E\big(\|\cdot\|_\infty^2\big)^{1/2}$ the desired limit $Z \in {\mathcal D}$ exists. Letting $m\to \infty$ in \eqref{cauchy0} gives \eqref{diffbound}. 

Moreover,  choosing  any increasing sequence $\{{n_j}\}_{j}$ such that
\be\label{rapcon}
 \sum_{(x,y)\in \Pi ^+ : y>n_j} y^{-2/b}\ < \ 2^{-j}
\ee
for all sufficiently large $j$,
then by \eqref{cauchy0} $\E\big(\|Z_{n_{j+1}}-Z_{n_j}\|_\infty^2\big)< c\ 2^{-j}$ so almost surely
 $Z= Z_{n_1} + \sum_{j=1}^\infty (Z_{n_{j+1}}-Z_{n_j})$   is convergent in $\|\cdot\|_\infty$. If $0<b<1$ then
\begin{align*}
\E\big(\|Z_{n+1}&  -Z_n\|_\infty\big)\ \leq\ \big(\E\big(\|Z_{n+1} -Z_n\|_\infty^2\big)\big)^{1/2} \\ 
& \leq \ c \Big(\sum_{(x,y)\in \Pi^+ : n<y\leq n+1} y^{-2/b}\Big)^{1/2} 
 \leq \ c \sum_{(x,y)\in \Pi^+ : n<y\leq n+1} y^{-1/b},
 \end{align*}
so $\E\big(\sum_{n=1}^\infty \|Z_{n+1}  -Z_n\|_\infty\big)<\infty$, giving that
$Z= Z_{1} + \sum_{j=1}^\infty (Z_{n+1}-Z_n) $  is  almost surely uniformly convergent.
 \end{proof}

\subsection{Local properties of functions defined by random signs}

We next examine local properties  of the random function $Z$ constructed  in  Section \ref{sec2.1}. We will show that for a given $t\in [t_0,t_1)$, almost surely $Z$ satisfies a H\"{o}lder condition to the right of $t$   and also is locally approximable by a random function $L$ defined in a similar way to $Z$, but with a fixed exponent $\alpha(Z(t))$.
Throughout this section we fix $t\in [t_0,t_1)$ and throughout this subsection we restrict $\Omega$ to the subspace of full probability (by Theorem \ref{finlim2}) such that there exists a sequence $n_j\nearrow \infty$ such that $Z_{n_j}(t) \to Z(t)$.
Let  $\mathcal{F}_{t}$ be the $\sigma$-field underlying the signs $\{S(x,y)\in \{-1,1\}: (x,y) \in \Pi^+, x\leq t\}$.

Let $Z$ be given by the norm limit of the partial sums $Z_n$ in \eqref{identsignsfin},  as in Section \ref{sec2.1}. 
We also construct $L_n, L \in {\mathcal D}$ restricted to $[t,t_1)$ in a similar way using the  point set $\{(x,y) \in \Pi^+:  (t<x<t_1)\}$ and a (conditional) fixed index  $\alpha(Z(t)) \in [a,b]$; thus  $L_n$ is the piecewise constant random function defined by
$$
L_n (u)\  \equiv\ \sum_{(x,y) \in \Pi^+\,:\, y\leq n} 1_{(t ,u]}(x) S(x,y) y^{-1/\alpha(Z_n(t))}
\qquad (t\leq u <t_1)
$$
where the $S(x,y)$ are independent random signs, and with $L\in {\mathcal D}$ defined by 
\be\label{limL}
\E\big(\|L_n - L\|_\infty^2\big)\to 0,
\ee
 as a particular case of Theorem \ref{finlim2}, where here  $\|\cdot \|_\infty$ is the supremum norm on $[t,t_1)$. 

\begin{prop}\label{prop4.3}
Let $\alpha: \mathbb{R} \to [a,b]$, $\Pi^+$ be as before and let  $Z_n, Z, L_n,L \in {\mathcal D}$ restricted to $[t,t_1)$ be  as above, taking the same realisations of $S(x,y)$ for $Z_n$ and $L_n$.
Then, there are constants $c_1, c_2$ depending only on $\Pi^+$ and $\alpha$ such that, conditional on ${\mathcal F}_{t}$, for all $0\leq h <t_1-t$,
\be\label{espest2}
\E\Big(\sup_{0\leq h'\leq h}\big(Z(t+h')-Z(t)\big)^2\Big) \ \leq \ 
c_1\!\!\!\!\sum_{(x,y)\in \Pi^+\, :\,  t< x\leq t+h}  y^{-2/\alpha(Z(t))} 
\ee
and
\begin{align}
\E\Big(\sup_{0\leq h'\leq h}\big(&(Z(t+h')-Z(t)) -L(t+h')\big)^2\Big)\nonumber\\
& \leq \ 
c_2\Big(\sum_{(x,y)\in \Pi^+\, :\,  t< x\leq t+h}  y^{-2/(a,b)}\Big)
\Big(\sum_{(x,y)\in \Pi^+\, :\,  t< x\leq t+h}  y^{-2/\alpha(Z(t))}\Big).\label{espest2L}
\end{align}
\end{prop}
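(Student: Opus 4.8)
The plan is to mimic the martingale argument of Proposition~\ref{finlim1A}, but now comparing three families of piecewise-constant functions on $[t,t_1)$: the partial sums $Z_n$, the partial sums $L_n$, and (for the second estimate) the difference $Z_n - L_n$. Throughout we condition on $\mathcal{F}_t$, so that $\alpha(Z(t))$ (or rather $\alpha(Z_n(t))$) is to be treated as a fixed exponent in $[a,b]$. As in the earlier proof, list the points $\{(x,y)\in\Pi^+ : t<x<t_1,\ y\leq n\}$ as $(x_1,y_1),\dots,(x_N,y_N)$ with $x_1<\dots<x_N$, set $x_0:=t$, $x_{N+1}:=t_1$, and let $\mathcal{G}_i$ be the $\sigma$-field generated by $\mathcal{F}_t$ together with the signs $S(x_1,y_1),\dots,S(x_i,y_i)$. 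The key observation is that evaluated at the jump points the recursions read
$$Z_n(x_i)-Z_n(t) = \sum_{0<j\leq i} S(x_j,y_j)\, y_j^{-1/\alpha(Z_n(x_{j-1}))},\qquad L_n(x_i) = \sum_{0<j\leq i} S(x_j,y_j)\, y_j^{-1/\alpha(Z_n(t))},$$
so that each of $\{Z_n(x_i)-Z_n(t),\mathcal{G}_i\}$, $\{L_n(x_i),\mathcal{G}_i\}$ and $\{(Z_n(x_i)-Z_n(t))-L_n(x_i),\mathcal{G}_i\}$ is a martingale, the last because the $i$-th increment is $S(x_i,y_i)\big(y_i^{-1/\alpha(Z_n(x_{i-1}))}-y_i^{-1/\alpha(Z_n(t))}\big)$ and has conditional mean zero.

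\medskip
First I would prove \eqref{espest2}. Since $\alpha(Z_n(x_{j-1}))\in[a,b]$ we have $y_j^{-2/\alpha(Z_n(x_{j-1}))}\leq y_j^{-2/b}\leq\dots$; more usefully, comparing to the fixed exponent $\alpha(Z(t))$, write $y_j^{-2/\alpha(Z_n(x_{j-1}))}\leq C\, y_j^{-2/\alpha(Z(t))}$ for a constant depending only on $a,b$ (using $y^{-2/\alpha}\leq (1\vee y^{-2/a})$ type bounds, or more carefully $y^{-2/\alpha}$ versus $y^{-2/\alpha(Z(t))}$ differ by a bounded factor on $y\leq n$ only if $y$ is bounded away from... — here one must be a little careful and instead simply bound $y^{-2/\alpha(Z_n(x_{j-1}))}\le y^{-2/(a,b)}$ and absorb, OR bound directly by $y^{-2/\alpha(Z(t))}$ up to the factor coming from \eqref{ydif4}). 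By orthogonality of martingale increments (conditional on $\mathcal{F}_t$),
$$\E\big((Z_n(x_i)-Z_n(t))^2\,\big|\,\mathcal{F}_t\big) = \sum_{0<j\leq i}\E\big(y_j^{-2/\alpha(Z_n(x_{j-1}))}\,\big|\,\mathcal{F}_t\big)\ \leq\ c\sum_{0<j\leq i} y_j^{-2/\alpha(Z(t))},$$
and then Doob's maximal inequality applied to the martingale $\{Z_n(x_i)-Z_n(t)\}$ gives the bound $4c\sum_{t<x\leq t_1} y^{-2/\alpha(Z(t))}$ for $\E\big(\sup_{h'}(Z_n(t+h')-Z_n(t))^2\mid\mathcal{F}_t\big)$; restricting the sum to $x\le t+h$ handles the stated range of $h'$. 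Finally I pass to the limit: by Theorem~\ref{finlim2}, $Z_n\to Z$ in the norm \eqref{norm}, hence along a subsequence uniformly a.s., and lower semicontinuity of $\E(\sup(\cdot)^2)$ under a.s.\ convergence (Fatou) yields \eqref{espest2}. The constant $c_1$ depends only on $\Pi^+$ and $\alpha$.

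\medskip
For \eqref{espest2L} I apply the same scheme to the martingale $D_n(x_i):=(Z_n(x_i)-Z_n(t))-L_n(x_i)$. Its $i$-th increment is $S(x_i,y_i)\big(y_i^{-1/\alpha(Z_n(x_{i-1}))}-y_i^{-1/\alpha(Z_n(t))}\big)$, whose square is, by \eqref{ydif4}, at most $M^2\,(Z_n(x_{i-1})-Z_n(t))^2\, y_i^{-2/(a,b)}$. Hence by orthogonality
$$\E\big(D_n(x_i)^2\,\big|\,\mathcal{F}_t\big)\ \leq\ M^2\sum_{0<j\leq i}\E\big((Z_n(x_{j-1})-Z_n(t))^2\,\big|\,\mathcal{F}_t\big)\, y_j^{-2/(a,b)}.$$
Now insert the bound from the first part: $\E\big((Z_n(x_{j-1})-Z_n(t))^2\mid\mathcal{F}_t\big)\leq c\sum_{t<x\leq x_{j-1}} y^{-2/\alpha(Z(t))}\leq c\sum_{t<x\leq t+h} y^{-2/\alpha(Z(t))}$ for $x_{j-1}\le t+h$. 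This factors out of the $j$-sum, leaving $\sum_{t<x_j\leq t+h} y_j^{-2/(a,b)}$, which produces exactly the product of two sums in \eqref{espest2L}. Doob's maximal inequality on $\{D_n(x_i)\}$ (constant between jumps) gives the stated supremum bound for $Z_n,L_n$, and passing $n\to\infty$: $Z_n\to Z$ and $L_n\to L$ in norm, hence uniformly a.s.\ along a common subsequence, so $\sup_{h'}((Z_n(t+h')-Z_n(t))-L_n(t+h'))^2$ converges a.s.\ to the corresponding expression for $Z,L$; Fatou again delivers \eqref{espest2L} with $c_2$ depending only on $\Pi^+,\alpha$.

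\medskip
The main obstacle I anticipate is the bookkeeping at the limit stage and the control of the exponent: in the finite-$n$ martingale computations one naturally gets terms with exponent $1/\alpha(Z_n(x_{j-1}))$ rather than $1/\alpha(Z(t))$, and one must convert these to the fixed exponent $1/\alpha(Z(t))$ appearing in the statement. The clean way is to keep everything in terms of $y^{-2/(a,b)}$ in the intermediate \textit{variance} bounds where a $2^{-1/(a,b)}$-type loss is harmless, and only at the very end, when the relevant $y$ are those actually contributing, replace $y^{-2/\alpha(Z(t))}$ by itself (no replacement needed) — in other words, be careful that the first estimate \eqref{espest2} must already be proved with the exponent $\alpha(Z(t))$, which forces the argument above to compare $y^{-2/\alpha(Z_n(x_{j-1}))}$ with $y^{-2/\alpha(Z(t))}$ using continuity of $\alpha$ and the fact that $Z_n(x_{j-1})$ is close to $Z(t)$ for $x_{j-1}$ near $t$. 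For $x$ not near $t$ one simply bounds crudely; a modest enlargement of the constant $c_1$ absorbs both regimes. A secondary technical point is justifying the interchange of limit and supremum/expectation, which is routine given a.s.\ uniform convergence along a subsequence plus Fatou's lemma.
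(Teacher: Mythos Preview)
Your martingale setup and the treatment of the second estimate \eqref{espest2L} are essentially what the paper does, but there is a real gap in your argument for \eqref{espest2}: you cannot obtain a bound
\[
y_j^{-2/\alpha(Z_n(x_{j-1}))}\ \leq\ c\, y_j^{-2/\alpha(Z(t))}
\]
with $c$ depending only on $\Pi^+$ and $\alpha$. For $y_j>1$ the ratio $y_j^{-2/\alpha}/y_j^{-2/\alpha(Z(t))}=y_j^{2/\alpha(Z(t))-2/\alpha}$ is unbounded in $y_j$ as soon as $\alpha>\alpha(Z(t))$, and $\alpha(Z_n(x_{j-1}))$ can lie anywhere in $[a,b]$. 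Your fallback of bounding by $y_j^{-2/(a,b)}$ gives the wrong exponent in the final sum, and your near/far-from-$t$ split is circular: ``$Z_n(x_{j-1})$ close to $Z_n(t)$'' is precisely the content of \eqref{espest2}, so you cannot assume it to prove it, and for $x$ far from $t$ no enlargement of $c_1$ absorbs an unbounded ratio.

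The paper resolves this by the add-and-subtract you allude to but do not carry out: writing
\[
y_i^{-2/\alpha(Z_n(x_{i-1}))}\ \leq\ 2\big(y_i^{-1/\alpha(Z_n(x_{i-1}))}-y_i^{-1/\alpha(Z_n(t))}\big)^2+2\,y_i^{-2/\alpha(Z_n(t))}
\ \leq\ 2M^2 y_i^{-2/(a,b)}\,W_n(x_{i-1})^2+2\,y_i^{-2/\alpha(Z_n(t))},
\]
with $W_n(u)=Z_n(u)-Z_n(t)$, feeds $W_n(x_{i-1})^2$ back into the one-step recursion and yields
\[
\E\big(W_n(x_i)^2\mid\mathcal{F}_{i-1}\big)\ \leq\ W_n(x_{i-1})^2\big(1+2M^2 y_i^{-2/(a,b)}\big)+2\,y_i^{-2/\alpha(Z_n(t))}.
\]
Iterating this (a discrete Gronwall) produces the product $\prod_i(1+2M^2 y_i^{-2/(a,b)})$, which is bounded uniformly in $n$ by $\exp\big(2M^2\sum_{(x,y)\in\Pi^+}y^{-2/(a,b)}\big)$ and becomes your constant $c_1$ (after Doob). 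Once \eqref{espest2} is established this way, your derivation of \eqref{espest2L} via the $D_n$-martingale and insertion of the $W_n$-bound is exactly the paper's argument. The passage to the limit is also as you describe, along a subsequence with $Z_{n_j}(t)\to Z(t)$ so that $\alpha(Z_{n_j}(t))\to\alpha(Z(t))$ and the right-hand sums converge.
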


\begin{proof} 
For brevity write 
$$W_n(u) = Z_n(u)-Z_n(t)\ \ \mbox{ and } \ \ D_n(u) = W_n(u)-L_n(u)    \qquad (t\leq u <t_1).$$
Let $0 \leq h \leq t_1-t$.  
For each $n$, order the points $(x,y) \in \{\Pi^+: t<x\leq t+h, y\leq n\}$ as $(x_i,y_i)$ with  $x_1< x_2 < \cdots < x_K$ and  let $x_0=t$ (as before we lose little other than awkward notation by assuming that the $x_i$ are distinct). Let $\mathcal{F}_i$ be the $\sigma$-field underlying the signs $\{S(x_j,y_j): 1\leq  j\leq i\}$.
Then for $1\leq i \leq K$,
$$W_n(x_i)\  =\  W_n(x_{i-1})   +  S(x_i,y_i)y_i^{-1/\alpha (Z_n(x_{i-1}))}$$
and 
$$
D_n(x_i)\  =\  D_n(x_{i-1})  +  S(x_i,y_i)
\big(y_i^{-1/\alpha (Z_n(x_{i-1}))}- y_i^{-1/\alpha (Z_n(t))}\big)
$$
so  that $\big\{ W_n(x_i), \mathcal{F}_{i}\big\}$ and  $\big\{ D_n(x_i), \mathcal{F}_{i}\big\}$ are bounded martingales.
For each $1\leq i \leq K$, conditioning on $\mathcal{F}_{i-1}$ gives
\begin{eqnarray}
\E\big( W_n(x_i)^2\big| \mathcal{F}_{i-1}\big) 
&=& W_n(x_{i-1})^2 \ +\  y_i^{-2/\alpha (Z_n(x_{i-1}))}\nonumber\\
&\leq& W_n(x_{i-1})^2 \ +\   \big(y_i^{-1/\alpha (Z_n(x_{i-1}))}- y_i^{-1/\alpha (Z_n(t))}+ y_i^{-1/\alpha (Z_n(t))}\big)^2\nonumber\\
&\leq& W_n(x_{i-1})^2 \ +\  2  \big(y_i^{-1/\alpha (Z_n(x_{i-1}))}- y_i^{-1/\alpha (Z_n(t))}\big)^2 
\ +\  2  y_i^{-2/\alpha (Z_n(t))}, \nonumber\\
&\leq& W_n(x_{i-1})^2 \ +\  2 M^2 y_i^{-2/(a,b)}\big(Z_n(x_{i-1}) - Z_n(t)\big)^2 
\ +\  2  y_i^{-2/\alpha (Z_n(t))}\nonumber\\
&\leq& W_n(x_{i-1})^2 \big(1 +\  2 M^2 y_i^{-2/(a,b)}\big) 
\ +\  2  y_i^{-2/\alpha (Z_n(t))}\label{step1} 
\end{eqnarray}
and
\begin{eqnarray}
\E\big( D_n(x_i)^2\big| \mathcal{F}_{i-1}\big) 
&=& D_n(x_{i-1})^2 \ +\big(y_i^{-1/\alpha (Z_n(x_{i-1}))}- y_i^{-1/\alpha (Z_n(t))}\big)^2\nonumber\\
&\leq& D_n(x_{i-1})^2  + M^2 y_i^{-2/(a,b)} (Z_n(x_{i-1})-Z_n(t))^2\nonumber\\
&=& D_n(x_{i-1})^2  + M^2 y_i^{-2/(a,b)} W_n(x_{i-1})^2\label{step2} 
\end{eqnarray}
where we have used \eqref{ydif4}.
Then induction in decreasing $j$ using \eqref{step1} gives
$$
\E\big( W_n(x_K)^2\big| \mathcal{F}_{j}\big) 
\ \leq \
 \prod_{k=j+1}^{K} \big(1+2M^2 y_k^{-2/(a,b)}\big) 
 \Big(W_n(x_{j})^2+2\sum_{k=j+1}^K  y_k^{-2/\alpha(Z_n(t))} \Big)
$$
for  all $0\leq j\leq K-1$, and  induction in decreasing $j$ using \eqref{step1} and \eqref{step2} gives
\begin{eqnarray*}
\E\big( D_n(x_K)^2\big| \mathcal{F}_{j}\big) 
&\leq&\big(M^2 \sum_{k=j+1}^K  y_k^{-2/(a,b)} \big)
 \prod_{k=j+1}^{K-1} \big(1+2M^2 y_k^{-2/(a,b)}\big) \\
 &&+\  \Big(W_n(x_{j})^2+2\sum_{k=j+1}^{K-1} y_k^{-2/\alpha(Z_n(t))} \Big)
+ D_n(x_j)^2.
\end{eqnarray*}
for all $0\leq j\leq K-1$. Setting $j=0$ in these two estimates and noting that $W_n(x_0) = W_n(t)=0$ and $D_n(x_0) = D_n(t)=0$, we get  expectations conditioned only on $ \mathcal{F}_{t}$:
$$
\E\big( W_n(x_K)^2\big) 
\ \leq \
2 \prod_{k=1}^{K} \big(1+2M^2 y_k^{-2/(a,b)}\big) 
 \Big(\sum_{k=1}^K  y_k^{-2/\alpha(Z_n(t))} \Big)$$
and 
$$
\E\big( D_n(x_K)^2\big) 
\ \leq\ 2 M^2 \big(\sum_{k=1}^K  y_k^{-2/(a,b)} \big)
 \prod_{k=1}^{K-1} \big(1+2M^2 y_k^{-2/(a,b)}\big)
  \Big(\sum_{k=1}^{K-1} y_k^{-2/\alpha(Z_n(t))} \Big).
$$
Noting that $W_n$ and $D_n$ are constant between the $x_i$ and applying  Doob's inequality to the martingales $W_n(x_i)$ and $D_n(x_i)$,
\begin{align*}
\E\big(\sup_{0\leq h'\leq h} (& Z_n(t+h)-  Z_n(t))^2\big)\ = \ 
\E\big(\sup_{0\leq h'\leq h} W_n(t+h')^2\big)
\ \leq\  \E\big(\max_{0\leq k \leq K} W_n(x_k)^2\big) \\
&\leq \ 4\E\big( W(x_K)^2\big) \ \leq \ 8c_3  \sum_{k=1}^K y_k^{-2/\alpha(Z_n(t))}
\ \leq \ 8c_3  \sum_{(x,y) \in \Pi^+: t<x\leq t+h}y^{-2/\alpha(Z_n(t))},
\end{align*}
where $c_3 = \prod_{k=1}^{K-1} \big(1+2M^2 y_k^{-2/(a,b)}\big)$, and 
\begin{align*}
\E\Big(\sup_{0\leq h'\leq h} \big(&(Z_n(t+h')-Z_n(t))-L_n(t+h')\big)^2\Big)\ = \ 
\E\big(\sup_{0\leq h'\leq h} D_n(t+h')^2\big)\\
& \leq\  \E\big(\max_{0\leq k \leq K} D_n(x_k)^2\big) \ \leq \ 4\E\big( D(x_K)^2\big) \\
& \leq \ 8M^2 c_3  \big(\sum_{k=1}^K  y_k^{-2/(a,b)} \big)\big(\sum_{k=1}^K y_k^{-2/\alpha(Z_n(t))}\big)\\
& \leq \ 8M^2 c_3  \big(\sum_{(x,y) \in \Pi^+: t<x\leq t+h} y_k^{-2/(a,b)} \big)\big(\sum_{(x,y) \in \Pi^+: t<x\leq t+h} y_k^{-2/\alpha(Z_n(t))}\big).
\end{align*}
By Theorem \ref{finlim2}, $\E\big(\|Z_n -Z\|_\infty^2\big) \to 0$ and $\E\big(\|L_n -L\|_\infty^2\big) \to 0$ and there is a sequence $n_j\nearrow \infty$ such that $Z_{n_j}(t) \to Z(t)$,  so we can take the limit of these inequalities along this subsequence using dominated convergence to get \eqref{espest2} and  \eqref{espest2L}  with $c_1 = 8c_3$ and $c_2=8M^2 c_3$. 
\end{proof}

We remark that versions of \eqref{espest2} and  \eqref{espest2L} with
$$
4\!\!\!\!\sum_{(x,y)\in \Pi^+\, :\,  t< x\leq t+h}  y^{-2/(a,b)} \quad \mbox{ and }\quad
4M^2\Big(\sum_{(x,y)\in \Pi^+\, :\,  t< x\leq t+h}  y^{-2/(a,b)}\Big)^2 .
$$
respectively as the right-hand side bounds  can be obtained using a simpler induction, but the  exponents are not so sharp.

We can immediately deduce a local right H\"{o}lder bound for $Z$ at $t$ as well getting a comparison with $L$. 

\begin{prop}\label{cty1}
Let $Z\in {\mathcal D}$  be the random function given by Theorem \ref{finlim2} and let $t\in [t_0,t_1)$.  Suppose that for some $\beta>0$,
\be\label{Obound}
\sum_{(x,y)\in \Pi^+\, :\,  t< x\leq t+h}  y^{-2/\alpha(Z(t))} \ = \ O(h^\beta) \qquad (0<h<t_1 - t).
\ee 
Then, conditional  on $\mathcal{F}_{t}$, given $0<\epsilon<\beta$  there exist almost surely random numbers $C_1, C_2 <\infty$  such that  for all $0\leq h < t_1-t$,
\be\label{holas}
|Z(t+h) -Z(t)| \ \leq \ C_1h^{(\beta-\epsilon)/2}.
\ee
If, in addition to  \eqref{Obound},
$$
\sum_{(x,y)\in \Pi^+\, :\,  t< x\leq t+h}  y^{-2/(a,b)} \ = \ O(h^\gamma) \qquad (0<h< t_1 - t).
$$ 
then
\be\label{locas}
\big|\big(Z(t+h)-Z(t)\big) -L(t+h)\big|\  \leq \ C_2 h^{(\beta+\gamma-\epsilon)/2}\qquad (0<h< t_1 - t),
\ee
where $L$ is as in  \eqref{limL} and defined using the same realisation of $\{S(x,y): (x,y) \in \Pi^+, t\leq x< t_1\}$ as $Z$.
\end{prop}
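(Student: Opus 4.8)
The plan is to obtain both Hölder estimates from the second-moment bounds \eqref{espest2} and \eqref{espest2L} of Proposition \ref{prop4.3} by a Kolmogorov--Chentsov type chaining over a geometric sequence of scales, carried out conditionally on $\mathcal{F}_t$. The cleanest way to organise this is to fix a regular version of the conditional probability $\P(\,\cdot\mid\mathcal{F}_t)$ and work under it: then $Z(t)$, and hence $\alpha(Z(t))$, is a fixed number, the hypothesis \eqref{Obound} is a genuine deterministic statement about the fixed point set $\Pi^+$, and \eqref{espest2}--\eqref{espest2L} hold verbatim. As a preliminary reduction I would note that since $\sum_{(x,y)\in\Pi^+}y^{-2/(a,b)}<\infty$ and therefore also $\sum_{(x,y)\in\Pi^+:\,t<x<t_1}y^{-2/\alpha(Z(t))}<\infty$, the $O(h^\beta)$ in \eqref{Obound} may be replaced, at the cost of enlarging the constant, by a bound valid for \emph{all} $0<h<t_1-t$: there is a (realisation-dependent) $C<\infty$ with $\sum_{(x,y)\in\Pi^+:\,t<x\le t+h}y^{-2/\alpha(Z(t))}\le C h^\beta$ throughout, and likewise for the $y^{-2/(a,b)}$-sum with exponent $\gamma$ under the second hypothesis.

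Now set $T=t_1-t$, $h_k=T2^{-k}$ and $A_k=\sup_{0\le h'\le h_k}|Z(t+h')-Z(t)|$. By \eqref{espest2} and the reduction above, $\E(A_k^2\mid\mathcal{F}_t)\le c_1 C\,h_k^\beta=c_1CT^\beta 2^{-k\beta}$. Fix $0<\epsilon<\beta$; the conditional Markov inequality with threshold $2^{-k(\beta-\epsilon)/2}$ yields $\P\bigl(A_k>2^{-k(\beta-\epsilon)/2}\mid\mathcal{F}_t\bigr)\le c_1CT^\beta 2^{-k\epsilon}$, which is summable in $k$, so by the conditional Borel--Cantelli lemma, conditional on $\mathcal{F}_t$ there is almost surely a finite random $K$ with $A_k\le 2^{-k(\beta-\epsilon)/2}$ for all $k\ge K$. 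For arbitrary $0<h<T$ pick $k$ with $h_{k+1}<h\le h_k$; if $h\le h_K$ then $k\ge K$ and $h>\tfrac12 h_k$ gives $2^{-k}<2h/T$, so $|Z(t+h)-Z(t)|\le A_k\le 2^{-k(\beta-\epsilon)/2}\le (2/T)^{(\beta-\epsilon)/2}h^{(\beta-\epsilon)/2}$. On the remaining range $h_K<h<T$ one uses that $Z\in\mathcal{D}$ forces $\|Z\|_\infty<\infty$ almost surely, so $B:=\sup_{t+h_K\le u<t_1}|Z(u)-Z(t)|<\infty$ and, since $h^{(\beta-\epsilon)/2}\ge h_K^{(\beta-\epsilon)/2}$, one gets $|Z(t+h)-Z(t)|\le B\le Bh_K^{-(\beta-\epsilon)/2}h^{(\beta-\epsilon)/2}$. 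Taking $C_1=\max\{(2/T)^{(\beta-\epsilon)/2},\,Bh_K^{-(\beta-\epsilon)/2}\}$ gives \eqref{holas}, the case $h=0$ being trivial.

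The bound \eqref{locas} is proved in exactly the same way, starting instead from \eqref{espest2L}: with $D_k=\sup_{0\le h'\le h_k}\bigl|(Z(t+h')-Z(t))-L(t+h')\bigr|$ the two hypotheses give $\E(D_k^2\mid\mathcal{F}_t)\le c_2CC'\,h_k^{\beta+\gamma}$, and the identical Markov/Borel--Cantelli/interpolation chain produces the exponent $(\beta+\gamma-\epsilon)/2$; here one uses that $L\in\mathcal{D}$ as well, so $L$ is almost surely bounded and the large-$h$ tail is again controlled. I expect the only genuinely delicate point to be the bookkeeping around the conditioning on $\mathcal{F}_t$ --- namely checking that \eqref{espest2}--\eqref{espest2L}, already conditional on $\mathcal{F}_t$, combine correctly with the (randomly-constanted) hypotheses and that Borel--Cantelli may be applied fibrewise; this is handled by passing to the regular conditional law $\P(\,\cdot\mid\mathcal{F}_t)$ as above. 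Everything else is the routine upgrade from $L^2$ increment bounds to sample-path Hölder regularity.
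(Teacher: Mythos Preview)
Your argument is correct and follows essentially the same route as the paper: both use the conditional second-moment bounds \eqref{espest2} and \eqref{espest2L} along the dyadic scales $h_k=(t_1-t)2^{-k}$ and then upgrade to a pathwise H\"older bound. The only cosmetic difference is that the paper, instead of Markov plus Borel--Cantelli, sums directly: it shows
\[
\E\Big(\sum_{k\ge 0}2^{k(\beta-\epsilon)}A_k^2\Big)
\le c\sum_{k\ge 0}2^{k(\beta-\epsilon)}2^{-k\beta}<\infty,
\]
so the random series is a.s.\ finite and hence $A_k\le C\,2^{-k(\beta-\epsilon)/2}$ for all $k$, after which the interpolation for general $h$ is exactly as you wrote; your more explicit treatment of the conditioning on $\mathcal{F}_t$ and of the large-$h$ regime is fine and arguably more careful than the paper's terse version.
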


\begin{proof}
Setting $h = 2^{-k}(t_1 - t)$ in  \eqref{espest2}, multiplying by  $2^{k(\beta- \epsilon)}$ and summing, 
$$
\E\Big(\sum_{k=0}^\infty 2^{k(\beta- \epsilon)}\sup_{0\leq h'\leq 2^{-k}(t_1-t)}\big(Z(t+h')-Z(t)\big)^2\Big) \ \leq \ 
c\sum_{k=0}^\infty 2^{k(\beta- \epsilon)} 2^{-k\beta} \ < \ \infty
$$
for some constant $c$, giving \eqref{holas}. The bound \eqref{locas} follows in a similar manner using \eqref{espest2L}.
\end{proof}

\section{General Poisson point sums}\label{sec:rand}
\setcounter{equation}{0}
\setcounter{theo}{0}

We apply the conclusions of Section \ref{sec:signs} to random functions where $\Pi_2^+$ is a realisation of  a Poisson point processes in the half-plane and show that this gives a self-stabilizing processes.
The key idea is that the distribution of the point sets $\{(\X,\Y) \in \Pi\}$ where $\Pi \subset (t_0,t_1)\times \mathbb{R}$ is  a Poisson point process with plane Lebesgue measure ${\mathcal L}^2$ as mean measure, is identical to that of 
$\{(\X,S(\X,\Y)\Y): (\X,\Y) \in \Pi_2^+, S(\X,\Y) = \pm 1\}$, where $\Pi_2^+$ is a Poisson point process on $(t_0,t_1)\times \mathbb{R}^+$  with double Lebesgue measure $2{\mathcal L}^2$ as mean measure and with the $S(\X,\Y)$ independently taking the values  $\pm1$ with equal probability $\frac{1}{2}$ for each $(\X,\Y) \in \Pi^+_2$; this follows from the superposition property of Poisson processes, see \cite[Sections 2.2, 5.1]{King}. 
Hence $\Pi$ can be realised by first sampling $(\X,\Y)$ from $\Pi_2^+$ and then assigning random signs to the $\Y$ coordinates. 

\subsection{Existence of random functions}

Given a Poisson process $\Pi \subset (t_0,t_1)\times \mathbb{R}$ and $\alpha :\mathbb{R} \to [a,b]$   with  $0<a <b<2 $, we wish to show that there exist random functions ${\mathcal D}$ satisfying 
\begin{eqnarray}\label{zedinfty}
Z(t)\ =\  a_0 + \sum_{(\X,\Y) \in \Pi } 1_{(t_0,t]}(\X)\, \Y^{\langle-1/\alpha (Z(\X_-))\rangle} \qquad (t_0\leq t <t_1)
\end{eqnarray}
in an appropriate sense. If $0<a< b<1$ then almost surely $\sum_{(\X,\Y) \in \Pi }  |\Y|^{-1/\alpha (Z(\X_-))}$ converges, in which case the sum in \eqref{zedinfty} is almost surely absolutely convergent, but if $\alpha(z)\geq 1$ for some $z$ there is no a priori guarantee of convergence. In a similar way to Section \ref{sec:signs} we define  $Z_n\in {\mathcal D}$ for $n\in \mathbb{N}$ by 
\begin{eqnarray}\label{zedn}
Z_n (t)\ =\  a_0 + \sum_{(\X,\Y) \in \Pi : |\Y|\leq n} 1_{(t_0,t]}(\X)\, \Y^{\langle-1/\alpha (Z_n(\X_-))\rangle} \qquad (t_0\leq t <t_1).
\end{eqnarray}
Almost surely this  sum is over a finite number of points and therefore, conditional on $\Pi$, $Z_n$ is a well-defined piecewise-constant random function. We are interested in convergence of $Z_n$  to a limiting function $Z$ that satisfies \eqref{zedinfty} in some sense. 

The following result, which is part of Campbell's theorem, will be useful in bounding Poisson sums.

\begin{theo}[Campbell's theorem] \label{camp}
Let $\Pi$ be a Poisson process on $S\subset \mathbb{R}^n$ with mean measure $\mu$ and let $f:S \to \mathbb{R}$ be measurable. Then
$$\E\bigg(\sum_{\Pt \in \Pi}f(\Pt)\bigg) = \int_S f(u)d\mu(u)$$
and 
$$\E\bigg(\exp\sum_{\Pt \in \Pi}f(\Pt)\bigg) = \exp\int_S \big( \exp f(u)-1\big)d\mu(u),$$
provided these integrals converge.
\end{theo}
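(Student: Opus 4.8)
The plan is to establish both identities first for indicator functions, then for simple functions, and finally for general measurable $f$ by approximation, using only the two defining properties of a Poisson process: that $N(A) := \#(\Pi \cap A)$ has a Poisson distribution with parameter $\mu(A)$, and that $N(A_1),\dots,N(A_k)$ are independent whenever the $A_i$ are pairwise disjoint.

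For $f = \1_A$ with $\mu(A) < \infty$, we have $\sum_{\Pt \in \Pi} f(\Pt) = N(A)$, so $\E\big(\sum_{\Pt\in\Pi} f(\Pt)\big) = \mu(A) = \int_S f\, d\mu$, while summing the Poisson mass function gives $\E\big(e^{N(A)}\big) = e^{-\mu(A)}\sum_{k\ge 0} e^k \mu(A)^k/k! = \exp\big(\mu(A)(e-1)\big) = \exp\big(\int_S(e^f-1)\,d\mu\big)$, since $e^{\1_A}-1 = (e-1)\1_A$. For a simple function $f = \sum_{i=1}^k c_i \1_{A_i}$ with the $A_i$ disjoint and of finite measure, $\sum_{\Pt\in\Pi} f(\Pt) = \sum_i c_i N(A_i)$ with the $N(A_i)$ independent Poisson; linearity of expectation yields the first identity, and independence together with $\E\big(e^{c_i N(A_i)}\big) = \exp\big(\mu(A_i)(e^{c_i}-1)\big)$ gives $\E\big(e^{\sum_{\Pt}f(\Pt)}\big) = \prod_i \exp\big(\mu(A_i)(e^{c_i}-1)\big) = \exp\big(\int_S(e^f-1)\,d\mu\big)$.

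To remove the simplicity assumption, for $f \ge 0$ choose simple $f_n \uparrow f$: the monotone convergence theorem applies simultaneously to $\E\big(\sum_{\Pt} f_n(\Pt)\big)\uparrow\E\big(\sum_{\Pt} f(\Pt)\big)$ and to $\int_S f_n\,d\mu \uparrow \int_S f\,d\mu$, and likewise to $e^{\sum_\Pt f_n(\Pt)}\uparrow e^{\sum_\Pt f(\Pt)}$ and $e^{f_n}-1\uparrow e^f -1$ (all nonnegative), giving both identities for nonnegative $f$. For signed $f$ with $\int_S|f|\,d\mu<\infty$, applying this to $f^+$ and $f^-$ shows $\sum_{\Pt\in\Pi}f(\Pt)$ converges absolutely almost surely and that the expectation splits, yielding the first identity; for the second, under the stated hypothesis that $\int_S(e^f-1)\,d\mu$ converges, one truncates, applies the bounded case to $f\vee(-n)$, and lets $n\to\infty$, controlling the contribution of the region where $f$ is very negative via $|e^f - 1|\le 1$ there.

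The only delicate point is this final limiting step for signed $f$ — making the interchange of limit and expectation rigorous when the sum $\sum_{\Pt\in\Pi}f(\Pt)$ need not converge absolutely, which is precisely the phenomenon flagged by the proviso ``provided these integrals converge''. Since in the applications in Sections \ref{sec:signs} and \ref{sec:rand} only nonnegative (or absolutely summable) integrands occur, the monotone-convergence argument suffices for our purposes, and we may otherwise simply appeal to the standard treatment in \cite{King}.
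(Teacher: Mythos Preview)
Your argument is correct and is in fact the standard proof of Campbell's theorem as given in Kingman's book. The paper itself does not prove this statement at all: its ``proof'' consists solely of the citation ``See \cite[Section 3.2]{King}'', so you have supplied considerably more detail than the authors, and your final caveat about the signed case matches exactly what is needed for the applications later in the paper.
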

\begin{proof}
See \cite[Section 3.2]{King}.
\end{proof}

\begin{lem} \label{sumest}
Let $\Pi \subset (t_0,t_1)\times \mathbb{R}$ be a Poisson process  with mean measure ${\mathcal L}^2$ and let $\alpha :\mathbb{R} \to [a,b]$ where  $0<a <b<2 $.  Then for all $0< \eta <2/b-1$ there is almost surely a random $C<\infty$ such that  

\be\label{tailsum}
\sum_{(\X,\Y) \in \Pi: |\Y|>n}  |\Y|^{-2/b}\ <\ C\, n^{-\eta}\qquad (n \in \mathbb{N}).
\ee
\end{lem}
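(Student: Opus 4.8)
The plan is to estimate the tail sum $\sum_{(\X,\Y)\in\Pi:\,|\Y|>n}|\Y|^{-2/b}$ by a dyadic decomposition in the $\Y$-variable and then apply Campbell's theorem (Theorem \ref{camp}) together with a Borel--Cantelli argument. First I would fix $0<\eta<2/b-1$, which guarantees $2/b>1+\eta>1$, and introduce an auxiliary exponent $\eta'$ with $\eta<\eta'<2/b-1$. For $j\in\mathbb{N}$ let $A_j=\{(x,y):t_0<x<t_1,\ 2^{j-1}<|y|\le 2^j\}$, a strip of Lebesgue measure $(t_1-t_0)2^{j-1}$, and let $N_j=\#(\Pi\cap A_j)$, which by the defining property of the Poisson process is Poisson-distributed with mean $(t_1-t_0)2^{j-1}$. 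On $A_j$ we have $|y|^{-2/b}\le 2^{-2(j-1)/b}$, so $\sum_{(\X,\Y)\in\Pi\cap A_j}|\Y|^{-2/b}\le N_j\,2^{-2(j-1)/b}$.

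Next I would control $N_j$ probabilistically. Since $N_j$ is Poisson with mean $\lambda_j:=(t_1-t_0)2^{j-1}$, a standard Chernoff/Campbell bound gives $\P(N_j>2\lambda_j)\le e^{-c2^j}$ for a constant $c>0$ depending only on $t_1-t_0$; alternatively one can apply the exponential form of Campbell's theorem directly to $f=\theta\mathbf{1}_{A_j}$. These probabilities are summable in $j$, so by Borel--Cantelli there is almost surely a (random) $J$ with $N_j\le 2\lambda_j=(t_1-t_0)2^j$ for all $j\ge J$. Hence, almost surely, for $2^{k}\ge 1$,
\begin{equation*}
\sum_{(\X,\Y)\in\Pi:\,|\Y|>2^k}|\Y|^{-2/b}\ \le\ \sum_{j>k}N_j\,2^{-2(j-1)/b}\ \le\ C'\sum_{j>k}2^{j}2^{-2j/b}\ =\ C'\sum_{j>k}2^{-j(2/b-1)},
\end{equation*}
where $C'$ absorbs the finitely many terms $j\le J$ (which contribute a finite random amount) and the constant $2^{2/b}(t_1-t_0)$. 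Since $2/b-1>\eta$, the geometric series sums to at most a constant times $2^{-k(2/b-1)}\le 2^{-k\eta}$. Finally, for general $n$ choose $k$ with $2^k\le n<2^{k+1}$, so that the tail over $|\Y|>n$ is bounded by the tail over $|\Y|>2^k$, which is $\le C'' 2^{-k\eta}\le C'' 2^{\eta}n^{-\eta}=C n^{-\eta}$; the finitely many $n$ below $2^{J+1}$ are handled by enlarging $C$ since each individual tail sum is almost surely finite (indeed $\sum_{(\X,\Y)\in\Pi}|\Y|^{-2/b}<\infty$ a.s.\ because the full sum has finite expectation $\int\!\!\int |y|^{-2/b}\,dx\,dy$ restricted to $|y|\ge$ something — care is needed near $y=0$, but the statement only concerns the tail $|\Y|>n\ge 1$, so no issue arises there).

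The main obstacle, and the only genuinely delicate point, is handling the interplay between the random threshold $J$ and the claim that the bound holds for \emph{all} $n\in\mathbb{N}$ with a single random constant $C$: one must check that the contribution of the strips $A_j$ with $j\le J$ is almost surely finite and can be swept into $C$, and that this does not destroy the $n^{-\eta}$ decay (it does not, because that finite contribution is bounded by a constant times $n^{-\eta}$ for $n\le 2^{J}$ since $n^{-\eta}\ge 2^{-J\eta}$ there, and is zero once $n$ exceeds all the $\Y$-values in those finitely many strips). The rest is routine: a dyadic bound, an exponential moment estimate for a Poisson random variable via Campbell's theorem, Borel--Cantelli, and summing a geometric series. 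I would present the argument in roughly that order, stating the Chernoff bound for $N_j$ as an immediate consequence of the second identity in Theorem \ref{camp}.
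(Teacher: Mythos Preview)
Your argument is correct but takes a more elaborate route than the paper's. The paper applies the \emph{first} identity in Campbell's theorem directly to the tail sum to get
\[
\E\Big(\sum_{(\X,\Y)\in\Pi:\,|\Y|>n}|\Y|^{-2/b}\Big)=2(t_1-t_0)\int_n^\infty y^{-2/b}\,dy\le c\,n^{1-2/b},
\]
then uses Markov's inequality at dyadic thresholds $n=2^k$, Borel--Cantelli, and monotonicity of the tail in $n$. You instead decompose dyadically in the $\Y$-variable, control the Poisson count $N_j$ on each strip via a Chernoff bound (the \emph{exponential} form of Campbell's theorem), and then sum a geometric series. Both arguments are valid; the paper's is shorter because a first-moment bound already yields summable probabilities, so the exponential concentration you invoke is unnecessary here. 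Your approach would buy something if one needed sharper-than-polynomial tail control, but for the stated lemma Markov's inequality suffices. Two minor remarks: the auxiliary exponent $\eta'$ you introduce is never actually used, and the strip $A_j$ has Lebesgue measure $(t_1-t_0)2^j$ rather than $(t_1-t_0)2^{j-1}$ since both signs of $y$ contribute; neither point affects the validity of your argument.
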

\begin{proof}
By Theorem \ref{camp},
$$
\E\Big(\sum_{(\X,\Y) \in \Pi: |\Y|>n}  |\Y|^{-2/b}\Big)\ = \ \int_n^\infty y^{-2/b}
\ \leq \ \frac{b}{2-b}n^{1-2/b}.$$
A Borel-Cantelli argument summing over $n=2^{-k}$ completes the proof.
\end{proof}

We can now obtain develop Theorem \ref{finlim2} to sums over a Poisson point process.

\begin{theo}\label{thmrand2}
Let $\Pi \subset (t_0,t_1)\times \mathbb{R}$ 
be a  Poisson point process with mean measure ${\mathcal L}^2$, let $\alpha :\mathbb{R} \to [a,b]$   where  $0<a <b<2 $ and let $a_0 \in \mathbb{R}$. 
Then there exists $Z \in {\mathcal D}$ satisfying  
\eqref{zedinfty} in the sense that   
 $\lim_{n\to \infty}\E\big(\|Z_n -Z\|_\infty^2\big)=0$ where $Z_n$ is as in  \eqref{zedn}. Moreover, there exists a sequence $n_j \nearrow \infty$ such that almost surely $\|Z_{n_j} -Z\|_\infty \to 0$.  If $0<b<1$ then almost surely $\|Z_{n} -Z\|_\infty \to 0$. 
\end{theo}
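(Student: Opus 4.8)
The plan is to transfer Theorem~\ref{finlim2} from the fixed-point-set setting of Section~\ref{sec:signs} to the Poisson setting by conditioning on a realisation $\Pi_2^+$ of a Poisson process on $(t_0,t_1)\times\mathbb{R}^+$ with mean measure $2{\mathcal L}^2$, applying random signs, and checking that the hypotheses on $\Pi^+$ hold almost surely. First I would invoke the superposition/marking identity recalled at the start of Section~\ref{sec:rand}: the point set $\{(\X,S(\X,\Y)\Y):(\X,\Y)\in\Pi_2^+\}$ has the same distribution as $\Pi$, so it suffices to produce $Z$ for this realisation. Conditional on $\Pi_2^+$, the sums \eqref{zedinfty} and \eqref{zedn} become exactly the sums \eqref{identsigns} and \eqref{identsignsfin} of Section~\ref{sec:signs} (with $y=|\Y|$), so Theorem~\ref{finlim2} applies provided the deterministic hypotheses on $\Pi^+$ are met almost surely.

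The key steps, in order, are: (i) verify that almost surely the points of $\Pi_2^+$ have distinct $x$-coordinates and are locally finite in $y$ — both standard properties of a planar Poisson process with absolutely continuous mean measure; (ii) verify that almost surely $\sum_{(\X,\Y)\in\Pi_2^+}|\Y|^{-2/(a,b)}<\infty$; this follows from Campbell's theorem (Theorem~\ref{camp}), since $\int_{t_0}^{t_1}\!\int_0^\infty (y^{-1/a}(1+|\log y|))^2 + (y^{-1/b}(1+|\log y|))^2\,dy\,dx<\infty$ because near $y=0$ the integrand is integrable (as $2/a<2/b<\dots$ — more precisely $2/b>1$ forces convergence of $\int_0^1 y^{-2/b}\cdot(\log)^2$? no: $\int_0^1 y^{-2/b}dy=\infty$) — so I must be careful here and restrict the $y$-integral: the divergence at $y\to\infty$ is controlled since $2/a,2/b>1$, and the divergence at $y\to 0$ of $y^{-2/b}$ is precisely why the sum over \emph{all} points does not converge, but the tail sum $\sum_{|\Y|>n}$ and the small-$y$ behaviour need the normalising structure of $\Pi$; (iii) conclude via Theorem~\ref{finlim2}, conditionally on $\Pi_2^+$, that $Z$ exists with $\E(\|Z_n-Z\|_\infty^2\mid\Pi_2^+)\to 0$, then take the outer expectation over $\Pi_2^+$ using Lemma~\ref{sumest} to dominate the bound \eqref{diffbound}; and (iv) extract the almost-sure subsequence exactly as in Theorem~\ref{finlim2}, choosing $n_j$ so that \eqref{rapcon} holds, which is possible by Lemma~\ref{sumest}, and handle the $0<b<1$ case by the same summability argument as in the proof of Theorem~\ref{finlim2}.

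The main obstacle — and the point requiring genuine care rather than routine checking — is step (ii)/(iii): the hypothesis of Section~\ref{sec:signs} requires $\sum_{(x,y)\in\Pi^+}y^{-2/(a,b)}<\infty$, but for a Poisson process with mean measure proportional to ${\mathcal L}^2$ one has $\E\sum_{(\X,\Y)\in\Pi_2^+}|\Y|^{-2/b}=2(t_1-t_0)\int_0^\infty y^{-2/b}dy$, which \emph{diverges at $y=0$}. The resolution is that the sums in \eqref{zedinfty}, like \eqref{sum}, converge only conditionally and only because of the symmetric-partial-sum interpretation: one works with $Z_n$ restricted to $|\Y|\le n$ and takes $n\to\infty$, so what is actually needed is not finiteness of the full sum but of the tail sums $\sum_{|\Y|>n}y^{-2/b}$ for each fixed $n$ (finite a.s. by local finiteness away from $y=0$) together with the rate \eqref{tailsum}; the product $\prod(1+M^2 y^{-2/(a,b)})$ over $y\le n$ is then a \emph{finite} product over finitely many points. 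So the correct reading is that Theorem~\ref{finlim2} should be applied not to $\Pi_2^+$ wholesale but to the truncated point sets, or equivalently one re-runs the Cauchy estimate of Proposition~\ref{finlim1A} with the product taken over $\{y\le n\}$ only; I would therefore carry out the proof by directly re-deriving \eqref{cauchy0} in the Poisson setting, bounding $\E(\prod_{y\le m}(1+M^2|\Y|^{-2/(a,b)}))$ via the exponential form of Campbell's theorem (Theorem~\ref{camp}) restricted to $\{|\Y|\le m\}$, where $\int_{t_0}^{t_1}\!\int_{1}^{?}$... — in short, the exponential moment $\E\exp(M^2\sum_{|\Y|\le m}|\Y|^{-2/(a,b)})$ is finite because on $\{y\ge\delta\}$ the integrand is bounded and integrable, while the contribution from $\{y<\delta\}$ is handled by pairing it with the tail factor $\sum_{n<|\Y|\le m}|\Y|^{-2/b}$ which is small, so the product over the "bad" small-$y$ points multiplied by a small sum stays controlled uniformly in $m$. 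This interplay — finite products over truncations, Campbell exponential bounds, and the Borel–Cantelli rate from Lemma~\ref{sumest} — is the crux; everything else is a transcription of the Section~\ref{sec:signs} arguments.
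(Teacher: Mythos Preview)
Your overall plan---superposition identity, condition on $\Pi_2^+$, apply Theorem~\ref{finlim2} pathwise, extract a universal subsequence via Lemma~\ref{sumest}---is exactly the paper's approach. But you have manufactured a difficulty in step (ii) that is not there, and your proposed workaround does not succeed.

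The ``main obstacle'' you identify rests on conflating \emph{infinite expectation} with \emph{almost-sure divergence}. You are right that
\[
\E\!\sum_{(\X,\Y)\in\Pi_2^+}\Y^{-2/(a,b)}
\;=\;2(t_1-t_0)\int_0^\infty y^{-2/(a,b)}\,dy\;=\;\infty,
\]
the blow-up occurring as $y\to 0$. But the hypothesis of Section~\ref{sec:signs} is that the sum be finite, not that its expectation be. And the sum is almost surely finite for an elementary reason: the mean measure of $(t_0,t_1)\times(0,1]$ equals $2(t_1-t_0)<\infty$, so almost surely only \emph{finitely many} points of $\Pi_2^+$ have $\Y\le 1$, and their contribution is trivially finite. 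For $\Y>1$ one has $\Y^{-2/(a,b)}=\Y^{-2/b}(1+\log\Y)^2$, and Campbell's theorem gives
\[
\E\!\sum_{(\X,\Y)\in\Pi_2^+:\,\Y>1}\Y^{-2/b}(1+\log\Y)^2
\;=\;2(t_1-t_0)\int_1^\infty y^{-2/b}(1+\log y)^2\,dy\;<\;\infty
\]
since $2/b>1$, so this tail is also almost surely finite. That is all the paper needs; the hypothesis of Section~\ref{sec:signs} holds for almost every realisation of $\Pi_2^+$, and Theorem~\ref{finlim2} applies directly.

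Your proposed alternative---re-deriving the Cauchy estimate and controlling the unconditional $\E\prod_{\Y\le m}(1+M^2\Y^{-2/(a,b)})$ by Campbell's exponential formula---is precisely what Theorem~\ref{thmrand3} does, and as that theorem makes explicit it \emph{requires} a lower cutoff $\Y\ge y_0>0$ to make the exponential integral converge. Without the cutoff the exponential moment is genuinely infinite, and your vague ``pair the bad small-$y$ product with a small tail sum'' would not yield a uniform-in-$m$ bound. So the detour is both unnecessary and, as sketched, unworkable. Once you see that the Section~\ref{sec:signs} hypothesis holds almost surely, the proof is exactly your steps (i), (iii), (iv) with nothing further to do.
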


\begin{proof}
Since $0< b<2$,  the Poisson point set  $\Pi$ is almost surely a countable set of isolated points with 
$$
 \sum_{(\X,\Y) \in \Pi} 1_{(t_0,t_1]}(\X)|\Y|^{-2/(a,b)}< \infty
$$
and with the $\X$ distinct.
As noted, $Z_n$ in \eqref{zedn} has the same distribution as 
$$
Z_n(t) = a_0+\!\!\sum_{(\X,\Y) \in \Pi^+_2 : |\Y|\leq n} 1_{(t_0,t]}(\X) S(\X,\Y)\Y^{-1/\alpha (Z(\X_-))}, 
$$
where $\Pi_2^+$ is a Poisson point process on $(t_0,t_1)\times \mathbb{R}^+$  with  $2{\mathcal L}^2$ as mean measure and $S(\X,\Y)$ are random signs on $\Pi_2^+$. Thus, by Theorem \ref{finlim2}, for almost all realisations  of   $\Pi_2^+$   there almost surely exists a random function  $Z \in {\mathcal D}$, such that  $\E\big(\|Z_n-Z\|_\infty^2) \to 0$, and also such that there exists a sequence $n_j \nearrow \infty$ with $\|Z_{n_j} -Z\|_\infty \to 0$; note that by \eqref{tailsum} and \eqref{rapcon} we can take the same sequence $n_j$ for all such realisations.
Thus the conclusion holds for almost all sign combinations for almost all realisations of $\Pi_2^+$ and so  for almost all $\{(\X,S(\X,\Y)\Y): (\X,\Y) \in \Pi_2^+, S(\X,\Y) =\pm 1\}$, that is for almost all $\Pi$.
\end{proof}

For purposes of simulating these random functions we would like an estimate on how rapidly  $Z_n$ given by \eqref{zedn} converges to $Z$. However we  cannot get useful estimates directly from  Theorem \ref{finlim2} since allowing $(x,y) \in \Pi^+$ with $y$ arbitrarily small leaves the sum in  \eqref{diffbound}  unbounded. However, we can get some concrete  estimates if we modify the setting slightly by assuming that there is  $y_0>0$ such that  $|\Y|\geq y_0$ if $(\X,\Y) \in \Pi$, which ensures that the right hand side of \eqref{diffbound1} below converges. In practice this is a realistic assumption in that it  excludes the possibility of $Z$ having unboundedly large jumps.

\begin{theo}\label{thmrand3}
Let  $y_0>0$ and  let $\Pi $ be a Poisson point process on $ (t_0,t_1)\times (-\infty, -y_0] \cup [y_0,\infty)$  
 with  mean measure ${\mathcal L}^2$ restricted to this domain. Let $a_0\in\mathbb{R}$, let $0<a <b<2 $, let $\alpha :\mathbb{R} \to [a,b]$ and  let $Z \in {\mathcal D}$ be the random function given by Theorem \ref{thmrand2} using  this $\Pi$ with $Z_n$ as in \eqref{zedn}. Then as $n\to \infty$
\be
\E\big(\|Z_n -Z\|_\infty^2\big)\ \leq \ 
\frac{8b(t_1-t_0)}{2-b}\exp\bigg(2M^2(t_1-t_0)\int_{y_0}^\infty  y^{-2/(a,b)}\ dy\bigg) 
 n^{-(2-b)/b}\ \to 0. 
\label{diffbound1}
\ee

\end{theo}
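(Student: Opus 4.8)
The plan is to obtain the bound by combining the deterministic estimate of Theorem~\ref{finlim2} with Campbell's theorem to control the expectation over the Poisson point process. Recall from \eqref{diffbound} that, conditional on a realisation of $\Pi$ (equivalently, of $\Pi^+_2$ after the sign randomisation described at the start of Section~\ref{sec:rand}),
$$
\|Z_n-Z\|_\infty^2 \ \leq \ 4\prod_{(\X,\Y)\in \Pi}\big(1+M^2|\Y|^{-2/(a,b)}\big)\sum_{(\X,\Y)\in\Pi\,:\,|\Y|>n}|\Y|^{-2/b},
$$
where this inequality now holds almost surely with respect to the signs for almost every realisation of $\Pi$, and where $\|\cdot\|_\infty$ denotes the supremum norm on $[t_0,t_1)$. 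So the first step is to take the expectation of the right-hand side over $\Pi$. The key simplification available here, which was not available in the general setting of Theorem~\ref{finlim2}, is that $|\Y|\geq y_0>0$ for all points, so $|\Y|^{-2/(a,b)}\leq y_0^{-2/(a,b)}$ is bounded and all the relevant Poisson integrals converge.

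The second step is to notice that the product $\prod_{(\X,\Y)\in\Pi}(1+M^2|\Y|^{-2/(a,b)})$ and the tail sum $\sum_{|\Y|>n}|\Y|^{-2/b}$ are \emph{independent} as functions of $\Pi$: the former depends only on the restriction of $\Pi$ to $(t_0,t_1)\times(\{|\Y|\leq n\})$ together with the part outside, while the tail sum depends only on $\Pi$ restricted to $|\Y|>n$; more simply, writing the product as a sum of logarithms we can split $\Pi$ into the disjoint regions $\{|\Y|\leq n\}$ and $\{|\Y|>n\}$, and the restrictions of a Poisson process to disjoint regions are independent. Actually, the cleanest route is to bound $\prod_{(\X,\Y)\in\Pi}(1+M^2|\Y|^{-2/(a,b)})$ above by the full product over all of $\Pi$ (which is $\geq 1$) and then simply observe that
$$
\E\bigg(\prod_{(\X,\Y)\in\Pi}\big(1+M^2|\Y|^{-2/(a,b)}\big)\cdot\!\!\sum_{(\X,\Y)\in\Pi\,:\,|\Y|>n}\!\!|\Y|^{-2/b}\bigg)
$$
factorises over the two regions by independence. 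For the tail-sum factor, Campbell's theorem (Theorem~\ref{camp}) gives
$$
\E\bigg(\sum_{(\X,\Y)\in\Pi\,:\,|\Y|>n}|\Y|^{-2/b}\bigg)
= (t_1-t_0)\int_{|y|>n}|y|^{-2/b}\,dy
= 2(t_1-t_0)\int_n^\infty y^{-2/b}\,dy
= \frac{2b(t_1-t_0)}{2-b}\,n^{1-2/b},
$$
valid since $2/b>1$. For the product factor, the exponential form of Campbell's theorem gives
$$
\E\bigg(\prod_{(\X,\Y)\in\Pi_{\{|\Y|\leq n\}}}\big(1+M^2|\Y|^{-2/(a,b)}\big)\bigg)
= \exp\bigg(\int\!\!\int_{|y|\leq n}M^2|y|^{-2/(a,b)}\,dx\,dy\bigg)
\leq \exp\bigg(2M^2(t_1-t_0)\int_{y_0}^\infty y^{-2/(a,b)}\,dy\bigg),
$$
where the inner integral over $y$ is finite precisely because the domain is $\{|y|\geq y_0\}$ and $2/(a,b)$ decays fast enough; here I have used $\log(1+u)\leq u$ so that $\prod(1+u_i)\leq\exp(\sum u_i)$ before applying Campbell's formula, or equivalently applied the exponential formula with $f=\log(1+M^2|y|^{-2/(a,b)})$ and then bounded $e^f-1\leq M^2|y|^{-2/(a,b)}$. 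Multiplying the two factors and the constant $4$, and bounding $n^{1-2/b}=n^{-(2-b)/b}$, yields exactly \eqref{diffbound1}; the convergence to $0$ is immediate since $(2-b)/b>0$.

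The only delicate point — and the one requiring a sentence of care in the write-up — is the independence/factorisation step: one must be sure that the deterministic bound \eqref{diffbound} really does hold for almost every realisation of $\Pi$ (which is the content of Theorem~\ref{thmrand2} and its proof via the $\Pi^+_2$ representation), and that the product over \emph{all} of $\Pi$ rather than just the restriction to $\{|\Y|\leq n\}$ still has finite expectation so that bounding the product upward is legitimate — both of these follow because $|\Y|\geq y_0$ makes $\int_{|y|\geq y_0}|y|^{-2/(a,b)}\,dy<\infty$. Everything else is a routine computation of Poisson integrals, so no genuine obstacle arises beyond keeping the two regions' contributions properly separated.
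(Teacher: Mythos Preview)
Your approach is the same as the paper's: condition on the point set, apply the deterministic bound from Section~\ref{sec:signs}, then integrate over the Poisson process using Campbell's theorem and independence over disjoint regions. Two slips need fixing, however.

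First, the inequality you quote from \eqref{diffbound} is \emph{not} an almost-sure bound on $\|Z_n-Z\|_\infty^2$ with respect to the signs; it is a bound on the \emph{conditional expectation} $\E\big(\|Z_n-Z\|_\infty^2\,\big|\,\Pi^+_2\big)$ over the signs for a fixed realisation of the point set. The subsequent step (taking the outer expectation over $\Pi$) is then simply the tower property, exactly as the paper writes it.

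Second, and more substantively, the factorisation by independence does \emph{not} hold for the full product $\prod_{(\X,\Y)\in\Pi}(1+M^2|\Y|^{-2/(a,b)})$ against the tail sum over $\{|\Y|>n\}$, since the full product contains factors from that same region. What you actually need --- and what your computation in fact uses --- is the product restricted to $\{|\Y|\leq n\}$. That restricted product is exactly what appears in the sharper estimate \eqref{cauchy0} of Proposition~\ref{finlim1A} (before it is weakened to \eqref{diffbound} by enlarging the product); the paper invokes \eqref{cauchy0} directly for this reason. Once you cite \eqref{cauchy0} and let $m\to\infty$ there, the product lives on $\{y_0\leq |\Y|\leq n\}$, the sum on $\{|\Y|>n\}$, and the independence and Campbell computations go through exactly as you wrote them to give \eqref{diffbound1}.
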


\begin{proof}
We proceed exactly as in the proofs of Proposition \ref{finlim1A} and Theorem \ref{finlim2}, except that we condition on  realisations  of a Poisson process $\Pi_2^+$ with mean measure $2{\mathcal L}$ on $(t_0,t_1)\times [y_0,\infty)$, to get  \eqref{diffbound} in this setting and then use Theorem \ref{thmrand2} to get the process on this $\Pi$. 
Then for $n\geq y_0$, using \eqref{diffbound}, the independence of the  Poisson point process $\Pi_2^+$ on  $(t_0,t_1)\times [y_0,n]$ and on $(t_0,t_1)\times (n,\infty)$, and Theorem \ref{camp},	
\begin{align*}
\E\big(\|Z_n& -Z\|_\infty^2\big) \ =\ \E\Big(\E\big(\|Z_n -Z\|_\infty^2\big| \Pi_2^+ \big)\Big) \\
&\leq\ 
4\,\E\bigg( \prod_{(\X,\Y) \in \Pi ^+_2: y_0\leq \Y\leq n } (1+M^2\Y^{-2/(a,b)})\sum_{(\X,\Y)\in \Pi ^+_2 : \Y> n} |\Y|^{-2/b}\bigg)\\
&=\ 
4\,\E\bigg( \exp\Big( \sum_{(\X,\Y) \in \Pi ^+_2: y_0\leq \Y\leq n } \log(1+M^2\Y^{-2/(a,b)})\Big)\bigg)
\E\bigg( \sum_{(\X,\Y)\in \Pi ^+_2 : \Y> n} |\Y|^{-2/b}\bigg)\\
&=\ 
4\,\exp\bigg(\int_{y_0}^n 2(t_1-t_0)\ M^2 y^{-2/(a,b)} dy\bigg) 
(t_1-t_0) \int_n^\infty  2y^{-2/b}  dy.
 \end{align*}
Letting $n\to \infty$ in the first integral and evaluating the second integral gives \eqref{diffbound1}.
\end{proof}

We remark that Theorem \ref{thmrand3} allows us to quantify the rate of convergence in probability of $\|Z_n -Z\|_\infty\to 0$ in Theorem \ref{thmrand2}. By the Poisson distribution $\P\{\Pi \cap ((t_0,t_1)\times [0,y_0] )= \emptyset\} = \exp(-y_0(t_1-t_0))$. Given $\epsilon >0$ we can choose $y_0$ to make this probability at most $\epsilon/2$, then using  \eqref{diffbound1} and Markov's inequality it follows that if  $n$ is sufficiently large then   $\P\{ \|Z_n -Z\|_\infty>\epsilon\} < \epsilon$. In practice, this leads to an enormous value of $n$. 

\subsection{Local properties and self-stabilising processes}

We next obtain local properties of the random functions defined by a Poisson point process as in Theorem \ref{thmrand2}. Not only are the sample paths   right-continuous, but they satisfy a local H\"{o}lder continuity estimate and are self-stabilizing, that is locally they look like $\alpha$-stable processes. 

We will use a bound provided by the $\alpha$-{\it stable subordinator} which may be defined for each (constant) $0<\alpha<1$ by
$$
S_\alpha (t) := \sum_{(\X,\Y) \in \Pi} 1_{(t_0 ,t]}(\X)\, |\Y|^{-1/\alpha}\qquad (t_0 \leq t <t_1),
$$
where the sum, which is almost surely convergent, is over a plane Poisson point process $\Pi$ with mean measure ${\mathcal L}^2$.
Then $S_\alpha $ on $[t_0 ,t_1)$ has stationary increments and for each $0<\epsilon<1/\alpha$ satisfies the H\"{o}lder property
\be\label{sub2}
S_\alpha (t)\   \leq\ C (t-t_0)^{(1/\alpha)\,-\,\epsilon},
\ee
where $C$ is almost surely finite; this may be established using Campbell's Theorem \ref{camp} in a similar way to the proof of Lemma \ref{sumest}, or see  \cite[Section III.4]{Ber} or \cite{Tak}.

We write $L^0_\alpha $ for the non-normalized $\alpha$-stable process, which has a representation
\begin{equation}\label{sumnn}
L^0_\alpha  (t) =\sum_{(\X,\Y) \in \Pi} 1_{(0,t]}(\X) \Y^{<-1/\alpha>}
\end{equation}
where $\Pi$ is a Poisson point process on $(t_1,t_2) \times \mathbb{R}$ with mean measure ${\mathcal L}^2$.
This sum is almost surely absolutely convergent if $0<\alpha<1$ but for general $0<\alpha<2$
it is the limit as $n \to \infty$ of
$$L^0_{\alpha,n}(t) =\sum_{(\X,\Y) \in \Pi:|\Y| \leq n} 1_{(0,t]}(\X) \Y^{<-1/\alpha>}.$$ 
Whilst $\E\big(\|L^0_{\alpha,n}  -L^0_\alpha\|_\infty^2\big)\to 0$ as a special case of Theorem \ref{thmrand2}, the constant value of $\alpha$ means that $L^0_{\alpha,n}$ and $L^0_{\alpha,m}-L^0_{\alpha,n}$ are independent for $m>n$ and also that $\{L^0_{\alpha,n}\}_n$ is a martingale, which ensures that $\|L^0_{\alpha,n}  -L^0_\alpha\|_\infty\to 0$ almost surely.  

In the same way to $Z_n$ we can think of $L^0_{\alpha,n}(t)$ in terms of  $\Pi^+_2$ and random signs, so that 
$$L^0_{\alpha,n}(t) =\sum_{(\X,\Y) \in \Pi^+_2:\Y \leq n} 1_{(0,t]}(\X) S(\X,\Y)\Y^{<-1/\alpha>}.$$

The following proposition is the analogue of Proposition \ref{cty1} in this context.

\begin{prop}\label{cty3}
Let $Z$  be the random function given by Theorem \ref{thmrand2} and let $t\in [t_0,t_1)$. Then, conditional on ${\mathcal F}_{t}$, given $0< \epsilon< 1/b$ there exist almost surely random numbers $C_1, C_2 <\infty$  such that  for all $0\leq h <t_1-t$,
\be\label{holas3}
|Z(t+h) -Z(t)|  \leq C_1 h^{1/\alpha(Z(t))\,-\,\epsilon}.
\ee
and
\be\label{locas3}
\big|\big(Z(t+h)-Z(t)\big)   -\big(L^0_{\alpha(t)}(t+h)-L^0_{\alpha(t)}(t)\big)\big|
 \ \leq \ C_2 h^{1/\alpha(Z(t))+ 1/b -\,\epsilon},
\ee
where  $L^0_{\alpha(t)}$ is the $\alpha(t)$-stable process \eqref{sumnn} defined using the same realisations of $\Pi$ as $Z$.
\end{prop}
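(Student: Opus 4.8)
The plan is to derive \eqref{holas3} and \eqref{locas3} from Proposition \ref{cty1} by verifying its hypotheses \eqref{Obound} and the additional $O(h^\gamma)$ bound in the present Poisson setting. The point is that Proposition \ref{cty1} was proved for an arbitrary fixed point set $\Pi^+$ with random signs, and by the superposition argument of Section \ref{sec:rand} the random function $Z$ of Theorem \ref{thmrand2} is (for almost all realisations of $\Pi_2^+$) an instance of the random function of Theorem \ref{finlim2}; so it suffices to check that the required power-law bounds on the truncated sums hold almost surely for a Poisson point process.

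First I would control $\sum_{(\X,\Y)\in\Pi\,:\,t<\X\leq t+h} |\Y|^{-2/\alpha(Z(t))}$. Conditional on $\mathcal{F}_t$ the points of $\Pi$ with $\X>t$ are still a Poisson point process with mean measure $\mathcal{L}^2$, independent of the exponent $\alpha(Z(t))\in[a,b]$; moreover the subordinator bound \eqref{sub2} applied with parameter $\alpha(Z(t))$ (or rather with any fixed $\alpha'\in[a,b]$, then taking a supremum, so that the random finite constant can be chosen uniformly) gives $\sum_{(\X,\Y)\in\Pi:t<\X\leq t+h}|\Y|^{-1/\alpha(Z(t))}\leq C h^{1/\alpha(Z(t))-\epsilon'}$, and hence also the squared-exponent sum is $O(h^{2/\alpha(Z(t))-\epsilon})\subset O(h^{2/b-\epsilon})$ — so \eqref{Obound} holds with $\beta$ as close to $2/\alpha(Z(t))$ as we like. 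Feeding $\beta=2/\alpha(Z(t))-\epsilon''$ into \eqref{holas} of Proposition \ref{cty1} yields $|Z(t+h)-Z(t)|\leq C_1 h^{1/\alpha(Z(t))-\epsilon}$, which is \eqref{holas3}. Similarly $\sum_{(\X,\Y)\in\Pi:t<\X\leq t+h}|\Y|^{-2/(a,b)}=O(h^\gamma)$ with $\gamma$ arbitrarily close to $2/b$ — here I would run a Campbell/Borel--Cantelli estimate as in Lemma \ref{sumest}, noting $\int_1^\infty y^{-2/(a,b)}\,dy<\infty$ because $2/b>1$, and that the truncated-at-$1$ part contributes $O(h)$ while the tail contributes an independent mean $(t_1-t_0)^{-1}$-type term that a dyadic Borel--Cantelli argument turns into the claimed power of $h$ (with a loss $\epsilon$). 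Then \eqref{locas} of Proposition \ref{cty1} gives $|(Z(t+h)-Z(t))-L(t+h)|\leq C_2 h^{(\beta+\gamma-\epsilon)/2}$ with $(\beta+\gamma)/2$ arbitrarily close to $1/\alpha(Z(t))+1/b$, which is \eqref{locas3} once we identify $L$.

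The identification of $L$ with $L^0_{\alpha(t)}(t+\cdot)-L^0_{\alpha(t)}(t)$ is the last step, and I expect it to be the main obstacle — or at least the point needing most care. By construction in Proposition \ref{prop4.3}, $L$ is the norm limit of $L_n(u)=\sum_{(x,y)\in\Pi^+:y\leq n}1_{(t,u]}(x)S(x,y)y^{-1/\alpha(Z(t))}$, built from the same signs as $Z$, which under the superposition correspondence is exactly the truncated sum $\sum_{(\X,\Y)\in\Pi:|\Y|\leq n,\,t<\X\leq u}\Y^{\langle-1/\alpha(Z(t))\rangle}$; its norm limit is the non-normalised $\alpha(Z(t))$-stable process over $(t,t_1)$ shifted to start at $t$, i.e. $L^0_{\alpha(Z(t))}(t+\cdot)-L^0_{\alpha(Z(t))}(t)$, using the representation \eqref{sumnn} and that $\E(\|L^0_{\alpha,n}-L^0_\alpha\|_\infty^2)\to 0$. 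One subtlety is that $\alpha(Z(t))$ is itself $\mathcal{F}_t$-measurable and random, so this identification should be carried out conditional on $\mathcal{F}_t$ (equivalently, on the value $Z(t)=z$, for which $L^0_z$ is a genuine stable process independent of $\mathcal{F}_t$); a second subtlety is matching the truncation level used for $L$ (which was $y\leq n$ in the $\Pi^+$ picture) with the symmetric truncation $|\Y|\leq n$ used to define $L^0_{\alpha,n}$, which is precisely what the random-signs representation $L^0_{\alpha,n}(t)=\sum_{(\X,\Y)\in\Pi^+_2:\Y\leq n}1_{(0,t]}(\X)S(\X,\Y)\Y^{\langle-1/\alpha\rangle}$ recorded just before the proposition is designed to handle. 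Once these bookkeeping matters are settled, \eqref{holas3} and \eqref{locas3} drop out of Proposition \ref{cty1} as described.
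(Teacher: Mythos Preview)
Your proposal is essentially correct and follows the same route as the paper: verify the power-law hypotheses of Proposition~\ref{cty1} via the subordinator bound \eqref{sub2}, invoke that proposition, and identify $L$ with the increment of the non-normalised stable process $L^0_{\alpha(Z(t))}$. The identification of $L$ and the conditioning subtleties you raise are handled exactly as you describe.

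One step needs correction. You propose to apply \eqref{sub2} with parameter $\alpha(Z(t))$ to bound $\sum_{(\X,\Y)\in\Pi:\,t<\X\leq t+h}|\Y|^{-1/\alpha(Z(t))}$ and then deduce the bound on the squared-exponent sum. But the subordinator $S_\alpha$ is only defined, and the sum $\sum|\Y|^{-1/\alpha}$ only converges, for $0<\alpha<1$; since $\alpha(Z(t))\in[a,b]$ with $b$ possibly $\geq 1$, that sum may diverge almost surely, so this intermediate step is invalid in general. The paper sidesteps this by observing directly that
\[
\sum_{(\X,\Y)\in\Pi_2^+:\,t<\X\leq t+h}\Y^{-2/\alpha(Z(t))}\ =\ 2\,S_{\alpha(Z(t))/2}(h),
\]
which is a subordinator with parameter $\alpha(Z(t))/2<1$ (because $b<2$), so \eqref{sub2} applies to it immediately to give the $O(h^{2/\alpha(Z(t))-\epsilon})$ bound. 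The same device works for $\sum\Y^{-2/(a,b)}$. With this adjustment your argument matches the paper's.
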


\begin{proof} 
Let $\epsilon>0$. Let  $\Pi^+_2$ be a Poisson point process on $\mathbb{R}$ with mean measure $2{\mathcal L}$.
From \eqref{sub2} 
$$
\sum_{(X,Y)\in \Pi^+_2\, :\,  t< x\leq t+h}  \Y^{-2/\alpha(Z(t))}
\ =\ 2S_{\alpha(Z(t))/2}(h)\ \leq\ C h^{2/\alpha(Z(t)) -\epsilon}
$$
where $C<\infty$ for almost all realisations of $\Pi^+_2$.
For such $\Pi^+_2$,  Proposition \ref{cty1} gives on randomising the signs,
$$\big|Z(t+h) -Z(t)|\Pi^+_2\big|\  \leq\ C_1 h^{1/\alpha(Z(t))-\epsilon/2-\epsilon/2}$$
for some random $C_1$, 
and hence \eqref{holas3} holds almost surely.

In the same way,
$$\sum_{(X,Y)\in \Pi^+_2\, :\,  t< x\leq t+h}  \Y^{-2/(a,b)}\ \leq\ C' (h^{-1})^{-2/(a,b) -\epsilon/2} \ \leq\ C^{\prime\prime}h^{2/b -\epsilon} 
$$
where $C', C^{\prime\prime}< \infty$ for almost all realisations of $\Pi^+_2$.
Then in Proposition \ref{cty1}, $L(t+h) = L^0_{\alpha(t)}(t+h)-L^0_{\alpha(t)}(t)$, so  for such $\Pi^+_2$, on randomising the signs,
$$\Big|\big(Z(t+h)-Z(t)\big)   -\big(L^0_{\alpha(t)}(t+h)-L^0_{\alpha(t)}(t)\big)\big|\Pi^+_2\Big| \  \leq\ C_2 h^{1/\alpha(Z(t))+ 1/b -\,\epsilon}$$
for random $C_2<\infty$, so \eqref{locas3} holds almost surely.
\end{proof}

We finally show that almost surely at each $t\in [t_0,t_1)$ the random function $Z$ of Theorem \ref{thmrand2} is right-localisable with local form  an $\alpha(Z(t))$-stable process, so that $Z$ may indeed be thought of as self-stablizing. 

\begin{theo}\label{rtloc}
Let $Z$ be the random function given by Theorem \ref{thmrand2} and let $t\in [t_0,t_1)$. Then, conditional on ${\mathcal F}_{t}$, almost surely $Z$ is strongly right-localisable at $t$, in the sense that
$$\frac{Z(t+ru) -Z(t)}{r^{1/\alpha(Z(t))}}\bigg|\,  \mathcal{F}_t \  \tod \ L^0_{\alpha(Z(t))}(u) \qquad (0\leq u \leq 1)
$$
as $r\searrow 0$, where convergence is in distribution with respect to $(D[0,1], \rho_S)$, with $ \rho_S$ is the Skorohod metric.
\end{theo}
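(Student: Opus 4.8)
The plan is to obtain the theorem from the pathwise comparison estimate \eqref{locas3} of Proposition \ref{cty3}, which already carries the analytic weight; the remaining ingredients are the exact self-similarity of the stable local form together with a converging-together argument. One works conditional on ${\mathcal F}_{t}$, so that $\alpha_0:=\alpha(Z(t))$ is a fixed number in $[a,b]$ and $L^0_{\alpha_0}=L^0_{\alpha(Z(t))}$. Since ${\mathcal F}_{t}$ is generated by the points of $\Pi$ with $\X\le t$ together with their signs, the points of $\Pi$ with $\X>t$ form a Poisson process on $(t,t_1)\times\mathbb{R}$ with mean measure ${\mathcal L}^2$ that is independent of ${\mathcal F}_{t}$; hence, conditional on ${\mathcal F}_{t}$, the increment process $h\mapsto L^0_{\alpha_0}(t+h)-L^0_{\alpha_0}(t)$ is a bona fide $\alpha_0$-stable L\'evy motion started at $0$, and so is $1/\alpha_0$-self-similar with stationary increments.

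Next I would fix $0<\epsilon<1/b$ and, for $0<r<t_1-t$, decompose for $u\in[0,1]$
$$
\frac{Z(t+ru)-Z(t)}{r^{1/\alpha_0}}\ =\ Y_r(u)+E_r(u),\qquad Y_r(u):=\frac{L^0_{\alpha_0}(t+ru)-L^0_{\alpha_0}(t)}{r^{1/\alpha_0}},
$$
where $L^0_{\alpha_0}$ is coupled to $Z$ through the common realisation of $\Pi$, exactly as in Proposition \ref{cty3}, and $E_r$ denotes the remaining term. By the $1/\alpha_0$-self-similarity and stationarity of increments of $\alpha_0$-stable L\'evy motion, $Y_r\ed L^0_{\alpha_0}$ as random elements of $(D[0,1],\rho_S)$ for every such $r$; in particular $Y_r\tod L^0_{\alpha_0}$ trivially.

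The remaining point is that $\|E_r\|_\infty\to 0$ as $r\searrow 0$. Applying \eqref{locas3} with $h=ru$ over $0\le u\le 1$, and using that the exponent $1/\alpha_0+1/b-\epsilon$ is positive so that the supremum over $u\in[0,1]$ is attained at $u=1$, gives, almost surely conditional on ${\mathcal F}_{t}$,
$$
\sup_{0\le u\le 1}\big|\big(Z(t+ru)-Z(t)\big)-\big(L^0_{\alpha_0}(t+ru)-L^0_{\alpha_0}(t)\big)\big|\ \le\ C_2\,r^{1/\alpha_0+1/b-\epsilon},
$$
so that $\|E_r\|_\infty\le C_2\,r^{1/b-\epsilon}\to 0$. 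Since uniform convergence implies convergence in the Skorohod metric, $\rho_S(Y_r+E_r,Y_r)\le\|E_r\|_\infty\to 0$ almost surely, hence in probability; as $(D[0,1],\rho_S)$ is a separable metric space, the standard converging-together lemma combined with $Y_r\ed L^0_{\alpha_0}$ yields that $u\mapsto r^{-1/\alpha_0}\big(Z(t+ru)-Z(t)\big)$ converges in distribution, in $(D[0,1],\rho_S)$, to $L^0_{\alpha_0}(u)$, which is the assertion.

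The main obstacle is expected to be purely bookkeeping: verifying that $\alpha(Z(t))$ is ${\mathcal F}_{t}$-measurable and that the post-$t$ points of $\Pi$ are independent of ${\mathcal F}_{t}$ with the correct mean measure, which is what legitimises treating $L^0_{\alpha_0}$ conditionally as a self-similar stable motion, and arranging that the almost-sure conclusions of Proposition \ref{cty3} hold on a single ${\mathcal F}_{t}$-conditional event of full probability and apply simultaneously to all the substitutions $h=ru$, $u\in[0,1]$. The substantive estimate is \eqref{locas3} itself, so beyond these checks the argument should be routine.
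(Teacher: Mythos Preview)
Your proposal is correct and follows essentially the same route as the paper: apply the pathwise estimate \eqref{locas3} with $h=ru$ to show the remainder $E_r$ is uniformly $O(r^{1/b-\epsilon})$, use self-similarity and stationarity of increments to see that $Y_r\ed L^0_{\alpha_0}$, and combine via the converging-together lemma (the paper cites \cite[Theorem 4.1]{Bil}). The only difference is that you make the conditioning structure and the measurability/independence bookkeeping more explicit than the paper does.
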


\begin{proof}
Let $0<\epsilon< 1/b$. For $u\in [0,1]$   and $0<r<t_1-t$, almost surely 
\begin{align*}
\big|\big(Z(t+  ru) & -Z(t)\big) -\big(L^0_{\alpha(t)} (t+ru)-L^0_{\alpha(t)} (t)\big)\big| \\
& \leq\  C_2 (ru)^{1/\alpha(Z(t))+ 1/b -\,\epsilon}
\ \leq\ C_2 r^{1/\alpha(Z(t))+ 1/b -\,\epsilon}
\end{align*}
for a random $C_2<\infty$, by Proposition \ref{cty3}. Thus
\begin{eqnarray*}
\bigg\| \frac{Z(t+  ru) - Z(t)}{r^{1/\alpha(Z(t))}}\  -\ \frac{L^0_{\alpha(Z (t))}(t+ru)-L^0_{\alpha(Z (t))}(t)}{ r^{1/\alpha(Z(t))}}\bigg\|_\infty 
&\leq&
C_2 \frac{r^{1/\alpha(Z(t))+ 1/b -\,\epsilon}}{ r^{1/\alpha(Z(t))}}\\
&=& C_2  r^{1/b-\epsilon} \to 0
\end{eqnarray*}
almost surely as $r\searrow 0$. In particular, since $\|\cdot\|_\infty$ dominates $\rho_S$ on $D[0,1]$,
$$ \rho_S \bigg( \frac{Z(t+  ru) - Z(t)}{r^{1/\alpha(Z(t))}}, \frac{L^0_{\alpha(Z (t))}(t+ru)-L^0_{\alpha(Z (t))}(t)}{ r^{1/\alpha(Z(t))}}    \bigg) \topp 0$$
almost surely and in probability. Using that $\alpha$-stable processes have stationary increments and scale with exponent $1/\alpha$,
$$\frac{L^0_{\alpha(Z (t))}(t+ru)-L^0_{\alpha(Z (t))}(t)}{ r^{1/\alpha(Z(t))}} 
\ \ed \  L^0_{\alpha(Z (t))}(u)-L^0_{\alpha(Z (t))}(0) \ed L^0_{\alpha(Z (t))}(u),$$
so we conclude, using \cite[Theorem 4.1]{Bil} to combine convergence in probability and in distribution, that  
$$\frac{Z(t+  ru) - Z(t)}{r^{1/\alpha(Z(t))}}\bigg|\,  \mathcal{F}_t \  \tod \ L^0_{\alpha(Z (t))}(u)$$
 as $r\searrow 0$.
\end{proof}

\section*{Acknowledgements}
The authors thank the referee for some helpful comments. KJF gratefully acknowledges the hospitality of Institut Mittag-Leffler in  Sweden, where part of this work was carried out. JLV is grateful to SMABTP for financial support.

\bibliographystyle{plain}

\begin{thebibliography}{abc-20}


\bibitem{Ber} J. Bertoin.  \emph{L\'{e}vy Processes}, Cambridge University Press, 1996.

\bibitem{Bil} P. Billingsley. \emph{Convergence of Probability Measures}, 1st Ed., John Wiley, 1968.

\bibitem{FLL} K. J. Falconer, R. Le Gu\'{e}vel and J. L\'{e}vy V\'{e}hel. Localisable moving average stable multistable processes.
\emph{Stoch. Models}, {\bf 25} (2009), 648-672.

\bibitem{FL} K. J. Falconer and J. L\'{e}vy V\'{e}hel. Multifractional, multistable, and other processes with prescribed local form.
\emph{J. Theoret. Probab.}, {\bf 22} (2009),  375-401.

\bibitem{FL2} K. J. Falconer and J. L\'{e}vy V\'{e}hel. Self-stabilizing processes. \emph{Preprint,}  arXiv 1802.02543.


\bibitem{KFLL} K. J. Falconer and L. Liu. Multistable Processes and Localisability.
\emph{Stoch. Models}, {\bf 28} (2012), 503-526.
  

\bibitem{XFL} X. Fan and J. L\'{e}vy V\'{e}hel. Multistable L\'{e}vy motions and their continuous approximations. \emph{Preprint,} arXiv:1503.06623.

\bibitem{King} J. F. C. Kingman.  \emph{Poisson Processes}, Oxford University Press, 1996.

\bibitem{LLA} R. Le Gu\'{e}vel and J. L\'{e}vy V\'{e}hel.  Incremental moments and H\"{o}lder exponents of multifractional multistable processes. \emph{ESAIM Probab. Stat.}, {\bf 17} (2013), 135--178.

\bibitem{LLL} R. Le Gu\'{e}vel,  J. L\'{e}vy V\'{e}hel and L. Liu. On two multistable extensions of stable L\'{e}vy motion and their semi-martingale representations. \emph{J. Theoret. Probab.}, {\bf 28} (2015),  1125--1144.

\bibitem{LL} J. L\'{e}vy V\'{e}hel and R. Le Gu\'{e}vel.
A Ferguson-Klass-LePage series representation of
multistable multifractional motions and related processes.
\emph{Bernoulli}, {\bf 18} 1099--1127.


\bibitem{Bk_Sam} G. Samorodnitsky and M. Taqqu. \emph{Stable Non-Gaussian Random Process}, Chapman and Hall, 1994.

\bibitem{Tak} K. Takashima. Sample path properties of ergodic self-similar processes.
 \emph{Osaka J. Math.}, {\bf 26} (1989), 159--189.  
 
 \bibitem{Wil} D. WIlliams.  \emph{Probability with Martingales}, Cambridge University Press, 1991.



\end{thebibliography}

\bigskip
\end{document}